\theoremstyle{plain}
\newtheorem{theorem}[subsection]{Theorem}
\newtheorem{proposition}[subsection]{Proposition}
\newtheorem{lemma}[subsection]{Lemma}
\newtheorem{corollary}[subsection]{Corollary}
\theoremstyle{definition}
\newtheorem{definition}[subsection]{Definition}
\newtheorem{example}[subsection]{Example}
\newcommand{\codim}{\textrm{codim}}
\newcommand{\F}{\mathcal{F}}
\newcommand{\Id}{\textrm{Id}}
\renewcommand{\Im}{\textrm{Im}}
\newcommand{\Ker}{\textrm{Ker}}
\newcommand{\lcm}{\textrm{lcm}}
\newcommand{\length}{\textrm{length}}
\newcommand{\N}{\mathbb{N}}
\renewcommand{\O}{\mathcal{O}}
\newcommand{\pr}{\textrm{pr}}
\newcommand{\Q}{\mathbb{Q}}
\newcommand{\red}{\textrm{red}}
\newcommand{\Spec}{\textrm{Spec}}
\newcommand{\Supp}{\textrm{Supp}}
\newcommand{\Z}{\mathbb{Z}}
\def\cl[#1]{\overline{\{#1\}}}
\newcommand{\cycl}{\textrm{cycl}}
\newcommand{\desc}{\textrm{desc}}
\newcommand{\eff}{\textrm{eff}}
\newcommand{\naive}{{*,\textrm{naive}}}
\newcommand{\res}{\textrm{res}}
\newcommand{\tr}{\textrm{tr}}
\newcommand{\Zcal}{\mathcal{Z}}
\begin{document}

\title{Descent of algebraic cycles}
\author{Johannes Ansch\"{u}tz}
\date{08.06.2015}

\begin{abstract}
We characterize universally generalizing morphisms which satisfy descent of algebraic cycles integrally as those universally generalizing morphisms which are surjective with generically reduced fibres.
In doing so, we introduce a naive pull-back of cycles for arbitrary morphisms between noetherian schemes, which generalizes the classical pull-back for flat morphisms, and then prove basic properties of this naive pull-back.
\end{abstract}

\maketitle

\section{Introduction}

\noindent
These notes discuss general descent properties of algebraic cycles.
The basic question can be described as follows.
Let $X,Y$ be noetherian schemes with groups of algebraic cycles $\Zcal^*(X),\Zcal^*(Y)$ and let $f:X\to Y$ be a surjective morphism.
We ask ourselves which conditions on $f$ guarantee that the \textit{descent sequence}
\begin{equation}
\label{equation-descent-sequence-in-the-introduction}
0\longrightarrow\mathcal{Z}^*(Y)\overset{f^*}{\longrightarrow}\mathcal{Z}^*(X)\overset{\pr_1^*-\pr_2^*}{\longrightarrow} \mathcal{Z}^*(X\times_Y X)
\end{equation}
is exact.
Inspired by known descent theory assuming $f$ faithfully flat seems natural, but turns out to be insufficient.
The obstruction is given by a (super)natural number $g_Y(f)$ which is defined as follows.
For $y\in Y$ let
$$
g_y(f):=\gcd\{\ \length(\O_{f^{-1}(\cl[y]),x})\ |\ x\in f^{-1}(\cl[y])\ \text{generic}\},
$$
then set
$$
g_Y(f):=\lcm\{\ g_y(f)\ |\ y\in Y\}.
$$
Paying the prize of introducing a naive pull-back of cycles for arbitrary morphisms (see \Cref{definition-naive-pull-back}), flatness can be replaced by the weaker notion of a universally generalizing morphism. A morphism $f:X\to Y$ of schemes is called generalizing if for every $x\in X$ the induced morphism
$$
f:\Spec(\O_{X,x})\to \Spec(\O_{Y,f(x)})
$$
is surjective (\cite[D\'{e}finition (1, 3.9.2)]{EGA_I_second}). We call $f$ universally generalizing if it stays generalizing after every base change. Typical examples of universally generalizing morphisms are flat morphisms.
We obtain the following answer to our question about descent of cycles:

\begin{theorem}
\label{theorem-main-theorem-introduction}
Assume $f:X\to Y$ is a surjective universally generalizing morphism of noetherian schemes such that $X\times_Y X$ is again noetherian.
Then the sequence \eqref{equation-descent-sequence-in-the-introduction} has torsion cohomology which vanishes if and
only if $\ g_Y(f)=1$.
\end{theorem}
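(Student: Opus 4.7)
The plan is to prove (i) injectivity of $f^*$, (ii) vanishing of the composition $(\pr_1^* - \pr_2^*) \circ f^*$, and then (iii) analyze the cohomology via a d\'evissage reducing to a local calculation at each $y \in Y$, where it becomes a linear algebra problem in terms of the multiplicities $\ell_i = \length(\O_{f^{-1}(\cl[y]), x_i})$ at the generic points $x_i$ of $f^{-1}(\cl[y])$. For injectivity: given a nonzero $Z = \sum n_\eta [\cl[\eta]]$, pick $\eta$ with $n_\eta \neq 0$. Surjectivity yields some $x \in X$ with $f(x) = \eta$, and the generalizing property forces $x$ to be a generic point of $f^{-1}(\cl[\eta])$. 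Moreover, $x$ cannot be a generic point of $f^{-1}(\cl[\eta'])$ for $\eta' \neq \eta$: otherwise $\eta = f(x) \in \cl[\eta']$ would be a proper specialization of $\eta'$, and by generalizing of $f$ one could produce a proper generalization of $x$ still in $f^{-1}(\cl[\eta'])$. Hence the $[x]$-coefficient of $f^*Z$ equals $n_\eta \ell_x \neq 0$, a contradiction. The vanishing of $(\pr_1^* - \pr_2^*) \circ f^*$ follows from the multiplicity identity below (or from any functoriality of the naive pull-back already established for universally generalizing morphisms, since $f\pr_1 = f\pr_2$).

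For the local analysis at a fixed $y \in Y$, write $B_i = \O_{f^{-1}(\cl[y]), x_i}$, so $g_y(f) = \gcd_i(\ell_i)$. A cycle $W = \sum m_i [x_i]$ supported on generic points of $f^{-1}(\cl[y])$ lies in the image of $f^*$ iff there is a common integer $n$ with $m_i = n \ell_i$. The crucial claim is that $W$ lies in $\Ker(\pr_1^* - \pr_2^*)$ iff the relations $m_i \ell_j = m_j \ell_i$ hold for all $i, j$. At a generic point $\alpha$ of $X \times_Y X$ lying over a pair $(x_i, x_j)$ (so that the Artinian local ring $\O_{X \times_Y X, \alpha}$ is a localization of $B_i \otimes_{\kappa(y)} B_j$ at a minimal prime), the naive pull-backs $\pr_1^*[x_i]$ and $\pr_2^*[x_j]$ contribute multiplicities $a_\alpha = \length(\O_{\pr_1^{-1}(\cl[x_i]), \alpha})$ and $b_\alpha = \length(\O_{\pr_2^{-1}(\cl[x_j]), \alpha})$ respectively. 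Using that $B_i \otimes_{\kappa(y)} B_j$ is flat over $B_i$ (and over $B_j$), filtering $\O_{X \times_Y X, \alpha}$ by powers of $\mathfrak{m}_i$ yields $\length(\O_{X \times_Y X, \alpha}) = \ell_i a_\alpha$, and symmetrically $= \ell_j b_\alpha$; hence $\ell_i a_\alpha = \ell_j b_\alpha$, and the descent equation $m_i a_\alpha = m_j b_\alpha$ at $\alpha$ collapses to $m_i \ell_j = m_j \ell_i$. The integer solutions form the cyclic subgroup generated by $(\ell_1/g_y(f), \ldots, \ell_r/g_y(f))$, and the quotient by $\Z \cdot (\ell_1, \ldots, \ell_r) = \Im(f^*|_y)$ is $\Z/g_y(f)\Z$. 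Summing over $y$ gives total cohomology $\bigoplus_y \Z/g_y(f)\Z$: torsion, and vanishing iff $g_Y(f) = \lcm_y g_y(f) = 1$.

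I expect the main obstacle to be not this multiplicity computation but the d\'evissage reducing an arbitrary kernel cycle to a sum of such "pure generic" cycles. Using the universally generalizing hypothesis, one shows that $\pr_i^*[x]$ for $x$ not generic in any $f^{-1}(\cl[y])$ has supports in $X \times_Y X$ that are incompatible for cancellation, so that modulo $\Im(f^*)$ every kernel element can be represented on the generic points of $f^{-1}(\cl[y])$ as $y$ ranges over $Y$. This is best handled by Noetherian induction on supports, but requires care to ensure the inductive step does not leak torsion beyond what $g_Y(f)$ dictates.
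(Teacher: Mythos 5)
Your core computation coincides with the paper's: reduce to a cycle $W=\sum m_ix_i$ supported on the generic points of a single fibre, extract the proportionality relations $m_i\ell_j=m_j\ell_i$, and conclude $W\in\Q f^\naive(y)\cap\Zcal^*(X)=\tfrac{1}{g_y(f)}\Z f^\naive(y)$, whence the $y$-component of the cohomology is $\Z/g_y(f)$. Two local inaccuracies should be repaired, though neither changes the outcome. In the injectivity step, a point $x$ with $f(x)=\eta$ need \emph{not} be a generic point of $f^{-1}(\cl[\eta])$ (take a closed point of the generic fibre of $\mathbb{A}^2\to\mathbb{A}^1$); you must choose $x$ generic in the fibre $f^{-1}(\eta)$, after which your argument ruling out $\eta'\neq\eta$ via a proper generalization of $x$ is valid (cf.\ \Cref{proposition-f-surjective-implies-f*-injective}). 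In the multiplicity identity, $\length(\O_{X\times_YX,\alpha})$ is in general infinite (already for $f=\Id$ and $y$ not generic), so it cannot be the middle term; the ring you can filter is the local ring of $\pr_1^{-1}(\cl[x_i])$ at $\alpha$, a localization of $\kappa(x_i)\otimes_{\kappa(y)}\O_{f^{-1}(y),x_j}$, and \Cref{lemma-cycles-of-sheaves-are-additive-in-short-exact-sequences} gives $a_\alpha=\ell_jc_\alpha$ and $b_\alpha=\ell_ic_\alpha$ with $c_\alpha=\length\bigl((\kappa(x_i)\otimes_{\kappa(y)}\kappa(x_j))_\alpha\bigr)$, so your relation $\ell_ia_\alpha=\ell_jb_\alpha$ survives.

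The genuine gap is the d\'evissage you flag at the end, and the Noetherian induction on supports you propose is not the needed tool. Because $f$, $\pr_1$, $\pr_2$ are generalizing, the generic points of $\pr_i^{-1}(\cl[x])$ lie over $x$, so the decompositions $\Zcal^*(X)=\bigoplus_{y}\bigoplus_{x\in f^{-1}(y)}\Z x$ and $\Zcal^*(X\times_YX)=\bigoplus_{y}\bigoplus_{z\in\pr_1^{-1}f^{-1}(y)}\Z z$ are preserved by $\pr_1^\naive$ and $\pr_2^\naive$; hence the whole complex, and therefore $H_f$, splits as a direct sum over $y\in Y$ with no induction and no possible ``leakage'' between fibres. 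What still requires proof inside one fibre is your unproved assertion that non-generic points have supports ``incompatible for cancellation'': one must show that a nonzero descent-datum cycle supported in $f^{-1}(y)$ is supported on \emph{all and only} the generic points of $f^{-1}(\cl[y])$. The paper does this by pushing forward from $\cl[y]$, localizing to $Y=\Spec(k(y))$, and comparing the support equality $\coprod_j\cl[x_j]\times_YX=\coprod_jX\times_Y\cl[x_j]$, using that $X\times_Y\cl[x_j]\to X$ is surjective over a point, to see that every generic point of $X$ occurs among the $x_j$. With that step supplied, your identification $H_f\cong\bigoplus_y\Z/g_y(f)$ is correct and even slightly sharper than the paper's stated conclusion, which only records that $H_f$ is annihilated by $g_Y(f)$ and deduces the converse from the failure of saturation of $\Zcal^*_{\eff.\desc}(f)$ when $g_Y(f)\neq1$.
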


\medskip\noindent
In particular, we obtain that descent of cycles holds rationally for arbitrary surjective universally generalizing morphisms $f:X\to Y$ between noetherian schemes such that $X\times_Y X$ is noetherian.
Examples of morphisms which satisfy descent integrally are surjective \textit{smooth} morphisms.

\medskip\noindent
We start these notes by recalling the construction of cycles associated to subschemes and define a naive pull-back of cycles.
The construction of the naive pull-back, as we present it here, reveils basic problems.
For example, the assignment $X\mapsto \mathcal{Z}^*(X)$ is not functorial for \textit{all} scheme morphisms, only for \textit{flat} ones (see \Cref{example-naive-pull-back-not-functorial}).
After discussing the push-forward of cycles along closed immersions, we will prove our main theorem \Cref{theorem-main-theorem-introduction} resp.\ \Cref{theorem-main-theorem}.
As we try to be very general we also discuss the rather trivial case of descent along a universally bijective morphism (see \Cref{proposition-descent-along-universally-bijective-morphisms}).

\section{Cycles of subschemes and naive pull-back of cycles}

\noindent
Let $X$ be a scheme.
We denote by
$$
\mathcal{Z}^*(X):=\bigoplus\limits_{x\in X} \Z x
$$
the free abelian group generated by the set underlying the topological space of the scheme $X$ and call elements in $\mathcal{Z}^*(X)$ cycles on $X$. If the local ring $\mathcal{O}_{X,x}$ has finite Krull dimension for every $x\in X$, e.g.\ if $X$ is locally noetherian, the group $\mathcal{Z}^*(X)$ is naturally graded by setting
$$
\mathcal{Z}^r(X):=\bigoplus\limits_{\substack{x\in X,\\ \textrm{ codim}(\overline{\{x\}},X)=r}}\mathbb{Z} x.$$
We want to associate cycles to closed subschemes.
Defining more generally cycles associated to coherent modules turns out to be more flexible. Before giving the definition we recall that the support $\Supp(\F)$ of a coherent sheaf $\F$ on a noetherian scheme $X$ is the closed subscheme of $X$ defined by the annihilator
$$
\textrm{Ann}_{\O_X}(\F):=\Ker(\O_X\to \underline{\textrm{Hom}}(\F,\F))
$$
of $\F$.

\begin{definition}
Let $X$ be a noetherian scheme and let $\F$ be a coherent $\O_X$-module on $X$.
We define
$$
\cycl(\F):=\sum\limits_{\substack{x\in \Supp(\F)\\ \text{generic point}}} \length_{\mathcal{O}_{X,x}}(\F_x)\cdot x \in \Zcal^*(X).
$$
If $Z\subseteq X$ is a closed subscheme, then we set
$$
\cycl(Z):=\cycl(\O_Z)=\sum\limits_{\substack{x\in Z\\ \text{generic point}}} \length(\O_{Z,x})\cdot x.
$$
\end{definition}

\noindent
The noetherianess assumption is needed at two places.
Firstly, that $Z$ has only finitely many generic points and secondly to assure that the lengths at these generic points are finite.
In general, these zero-dimensional local rings need not be artinian.
We see that the theory of cycles (in our definition above) is from the start restricted to \textit{noetherian} schemes.

\begin{lemma}
\label{lemma-cycles-of-sheaves-are-additive-in-short-exact-sequences}
Let $X$ be a noetherian scheme and let
$$
0\to \F\to \mathcal{G}\to \mathcal{H}\to 0
$$
be a short exact sequence of coherent $\O_X$-modules such that $\Supp(\F)=\Supp(\mathcal{H})$.
Then
$$
\cycl(\mathcal{G})=\cycl(\F)+\cycl(\mathcal{H}).
$$
\end{lemma}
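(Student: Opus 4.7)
The plan is to reduce the statement to the additivity of length on a short exact sequence of finite-length modules over a noetherian local ring, applied stalk-wise at each generic point of the common support.

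First I would unwind the effect of the hypothesis $\Supp(\F)=\Supp(\mathcal{H})$. In general one has $\Supp(\mathcal{G})=\Supp(\F)\cup\Supp(\mathcal{H})$ for a short exact sequence, so the hypothesis forces $\Supp(\F)=\Supp(\mathcal{G})=\Supp(\mathcal{H})$. Consequently the three sums defining $\cycl(\F),\cycl(\mathcal{G}),\cycl(\mathcal{H})$ are indexed by the same finite set $S$ of generic points of this common support. It then suffices to prove, for each $x\in S$, the numerical identity
$$
\length_{\O_{X,x}}(\mathcal{G}_x)=\length_{\O_{X,x}}(\F_x)+\length_{\O_{X,x}}(\mathcal{H}_x).
$$

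Fix such an $x$. Localising the short exact sequence at $x$ yields a short exact sequence $0\to\F_x\to\mathcal{G}_x\to\mathcal{H}_x\to 0$ of finitely generated $\O_{X,x}$-modules. Because $x$ is a generic point of $\Supp(\F)=\Supp(\mathcal{G})=\Supp(\mathcal{H})$, the support of each of the three stalks inside $\Spec(\O_{X,x})$ consists solely of the closed point: any other prime of $\O_{X,x}$ corresponds to a proper generisation of $x$ in $X$, which by the genericity of $x$ cannot lie in the common support. Hence each of $\F_x,\mathcal{G}_x,\mathcal{H}_x$ is a finitely generated module supported only at the maximal ideal of the noetherian local ring $\O_{X,x}$, and is therefore of finite length. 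The required numerical identity is now the standard additivity of length in a short exact sequence of finite-length modules, and summing over $x\in S$ gives $\cycl(\mathcal{G})=\cycl(\F)+\cycl(\mathcal{H})$.

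There is no real obstacle here; the only point worth emphasising is the role of the hypothesis $\Supp(\F)=\Supp(\mathcal{H})$. Without it, a generic point $x$ of $\Supp(\mathcal{G})$ could lie in, say, $\Supp(\F)$ as a generic point while sitting non-generically in $\Supp(\mathcal{H})$ (or not at all), so that $\length_{\O_{X,x}}(\mathcal{H}_x)$ is either infinite or contributes to $\cycl(\mathcal{H})$ with a different coefficient than to $\cycl(\mathcal{G})$. The hypothesis is precisely what synchronises the three indexing sets and keeps all three stalks of finite length at every $x$ in play.
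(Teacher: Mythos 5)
Your proposal is correct and follows essentially the same route as the paper's proof: use $\Supp(\mathcal{G})=\Supp(\F)\cup\Supp(\mathcal{H})$ to equate all three supports, observe that the three cycles are then indexed by the same generic points, and conclude by additivity of length in the localized short exact sequence. Your version just spells out in more detail why the stalks at those generic points have finite length.
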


\begin{proof}
As $\Supp(\mathcal{G})=\Supp(\F)\cup\Supp(\mathcal{H})$ we can conclude
$$
\Supp(\mathcal{G})=\Supp(\F)=\Supp(\mathcal{H}),
$$
and hence the points where one of the sheaves $\F, \mathcal{G},\mathcal{H}$ is of finite length are precisly the generic points of $\Supp(\mathcal{G})$.
As lengths are additive in short exact sequences, we can conclude.
\end{proof}

\begin{definition}
\label{definition-naive-pull-back}
Let $f:X\to Y$ be a morphism of noetherian schemes.
We define the naive pull-back
$$
f^{*,\textrm{naive}}:\mathcal{Z}^*(Y)\to\mathcal{Z}^*(X)
$$
by linear extension of the map
$$
y \in Y\mapsto \cycl(f^{-1}(\cl[y]))\in \mathcal{Z}^*(X)
$$
where $f^{-1}(\cl[y])$ denotes the scheme-theoretic pull-back of the closed reduced subscheme $\cl[y]\subseteq Y$.
\end{definition}

\medskip\noindent
As we will see this definition lacks some properties in the case $f$ is not flat.
Firstly, for $y\in Y$ the generic points of $f^{-1}(\cl[y])$ need not lie over $y$ as examples of closed immersions show.
This problem can be solved by requiring that $f$ is \textit{generalizing}.

\begin{definition}
A morphism $f:X\to Y$ of schemes is called generalizing if for all $x\in X$ the induced morphism
$$f:\Spec(\O_{X,x})\to\Spec(\O_{Y,f(x)})$$ is surjective.
Moreover, $f$ is called universally generalizing, if every base change of $f$ is generalizing.
\end{definition}

\medskip\noindent
If $f:X\to Y$ is generalizing, then for all $y\in Y$ the generic points of $f^{-1}(\cl[y])$ lie over $y$.
This property turns out to be of fundamental importance for our question. Examples of generalizing morphisms are flat morphisms or open morphisms.
Conversely, generalizing morphisms which are locally of finite presentation are open.
Another source of examples for universally open morphisms, usually not flat, can be given as follows.

\begin{example}
\label{example-normalisation-yields-universally-generalizing-morphisms} 
Assume that $f:X\to Y$ is a finite morphism, which is the normalisation of a geometrically unibranch integral noetherian scheme $Y$ inside a finite field extension at the generic point of $Y$.
It is known that $f$ satisfies the going-down theorem, i.e., $f$ is generalizing.
As $f$ is of finite type and $Y$ noetherian, $f$ is an open morphism.
Finally, \cite[Corollaire (14.4.3)]{EGA.IV.3} shows that $f$ is even universally open as $Y$ is geometrically unibranch.
\end{example}

\medskip\noindent
The definition of $f^{*,\textrm{naive}}$ involves the calculations of lengths, hence a purely topological condition like generalizing can not be sufficient in the following proposition.

\begin{proposition}
\label{proposition-flat-pull-back-commutes-with-cycles-of-subschemes}
 Let $f:X\to Y$ be a flat morphism of noetherian schemes and let $Z\subseteq Y$ be a closed subscheme. Then
 $$f^{*,\textrm{naive}}(\textrm{cycl}(Z))=\textrm{cycl}(f^{-1}(Z)).$$
\end{proposition}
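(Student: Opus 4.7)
Let $z_1,\ldots,z_n$ be the generic points of $Z$ and set $m_i:=\length_{\O_{Y,z_i}}(\O_{Z,z_i})$, so that by definition
$$
f^\naive(\cycl(Z))=\sum_{i=1}^n m_i\cdot\cycl(f^{-1}(\cl[z_i])).
$$
The plan is to compute $\cycl(f^{-1}(Z))$ point by point and to identify its coefficient at each generic point $x\in X$, with $z_i:=f(x)$, as $m_i\cdot\length(\O_{f^{-1}(\cl[z_i]),x})$. This reduces the statement to a local length identity plus a bookkeeping of generic points.

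The first step will be to match the generic points on the two sides. Because $f$ is flat it is generalizing, so every generic point of $f^{-1}(Z)$, and every generic point of each $f^{-1}(\cl[z_i])$, must map to a generic point of the scheme underneath. I would then check that a point $x\in X$ lying over $z_i$ is generic in $f^{-1}(\cl[z_i])$ if and only if it is generic in $f^{-1}(Z)$: one direction follows from the topological inclusion $f^{-1}(\cl[z_i])\subseteq f^{-1}(Z)$, and for the other any generalisation $x'$ of $x$ in $f^{-1}(Z)$ has image in $Z$ generalising $z_i$, which by minimality of $z_i$ in $Z$ forces $f(x')=z_i$ and hence $x'\in f^{-1}(\cl[z_i])$.

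The second, key step is a local length computation. Setting $A:=\O_{Y,z_i}$, $B:=\O_{X,x}$, and letting $I\subseteq A$ be the ideal cutting out $Z$ near $z_i$, one has $\length_A(A/I)=m_i$. I would then use flatness of $A\to B$ to tensor a composition series of $A/I$, whose subquotients are all isomorphic to $A/\mathfrak{m}_A$, with $B$: this produces a length filtration of $(A/I)\otimes_A B=B/IB$ with subquotients $B/\mathfrak{m}_A B$, hence
$$
\length_B(B/IB)=\length_A(A/I)\cdot\length_B(B/\mathfrak{m}_A B).
$$
Translated back this reads $\length(\O_{f^{-1}(Z),x})=m_i\cdot\length(\O_{f^{-1}(\cl[z_i]),x})$, and summing over the bijection of generic points from the previous step gives the desired equality. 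The only delicate point, which I expect to be the main obstacle, is the correct bookkeeping of generic points; once that is in place the remainder is just the standard behaviour of length under flat base change.
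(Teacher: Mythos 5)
Your argument is correct: the paper itself only cites \cite[Lemma 1.7.1]{IntersectionTheory} here, and what you write out is precisely the standard proof of that lemma --- matching generic points of $f^{-1}(Z)$ with those of the $f^{-1}(\cl[z_i])$ via the generalizing property of flat maps, and then using that tensoring a composition series of $\O_{Z,z_i}$ with the flat local ring $\O_{X,x}$ gives $\length(\O_{f^{-1}(Z),x})=\length(\O_{Z,z_i})\cdot\length(\O_{f^{-1}(\cl[z_i]),x})$. Both steps, including the bookkeeping that a generic point of $f^{-1}(\cl[z_j])$ lies over $z_j$ and hence cannot coincide with a point over $z_i$ for $i\neq j$, are handled correctly, so nothing is missing.
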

\begin{proof}
This is proven in \cite[Lemma 1.7.1]{IntersectionTheory}.
\end{proof}

\medskip\noindent
The necessity of flatness in \Cref{proposition-flat-pull-back-commutes-with-cycles-of-subschemes} has consequences for the functoriality of $f\mapsto f^{*,\textrm{naive}}$ and is the main problem encountered with the naive pull-back of cycles in the absence of flatness.

\begin{proposition}
\label{proposition-functoriality-for-pull-back}
Let $f:X\to Y$ and $g:Y\to Z$ be morphisms of noetherian schemes. Assume that $f$ is flat. Then
$$(g\circ f)^{*,\textrm{naive}}=f^{*,\textrm{naive}}\circ g^{*,\textrm{naive}}.$$
\end{proposition}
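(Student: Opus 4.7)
The strategy is to reduce to generators and then invoke Proposition~\ref{proposition-flat-pull-back-commutes-with-cycles-of-subschemes}, which is essentially where all the real work lives. Since both $(g\circ f)^{\naive}$ and $f^{\naive}\circ g^{\naive}$ are $\Z$-linear maps $\Zcal^*(Z)\to\Zcal^*(X)$, it suffices to check that they agree on a single generator $z\in Z$.

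On the one hand, unravelling the definition,
$$
(g\circ f)^{\naive}(z)=\cycl\bigl((g\circ f)^{-1}(\cl[z])\bigr)=\cycl\bigl(f^{-1}(g^{-1}(\cl[z]))\bigr),
$$
using that the scheme-theoretic preimage is functorial in the morphism. On the other hand, writing $W:=g^{-1}(\cl[z])$, which is a closed subscheme of $Y$, we have by definition $g^{\naive}(z)=\cycl(W)\in\Zcal^*(Y)$, so by $\Z$-linearity of $f^{\naive}$,
$$
f^{\naive}\bigl(g^{\naive}(z)\bigr)=f^{\naive}\bigl(\cycl(W)\bigr).
$$

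At this point the entire proposition collapses into a direct application of \Cref{proposition-flat-pull-back-commutes-with-cycles-of-subschemes}: since $f$ is flat and $W\subseteq Y$ is a closed subscheme, that proposition gives
$$
f^{\naive}\bigl(\cycl(W)\bigr)=\cycl\bigl(f^{-1}(W)\bigr)=\cycl\bigl(f^{-1}(g^{-1}(\cl[z]))\bigr),
$$
which matches the expression for $(g\circ f)^{\naive}(z)$ above. This concludes the proof.

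There is essentially no obstacle here beyond bookkeeping; the only substantive input is \Cref{proposition-flat-pull-back-commutes-with-cycles-of-subschemes}, and the role of flatness of $f$ is precisely to make that input available. One should note why flatness of $g$ is not needed: the cycle $g^{\naive}(z)$ is computed from $W=g^{-1}(\cl[z])$ by a formula involving only the lengths at generic points of $W$, and the commutation of $f^{\naive}$ with this formation of cycles uses only the flatness of $f$. Without the flatness assumption on $f$, one would lose the equality $f^{\naive}(\cycl(W))=\cycl(f^{-1}(W))$, which is exactly the failure of functoriality hinted at in the discussion preceding the proposition.
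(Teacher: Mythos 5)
Your proof is correct and follows the same route as the paper: reduce to a generator $z\in Z$, identify $(g\circ f)^{-1}(\cl[z])$ with $f^{-1}(g^{-1}(\cl[z]))$, and apply \Cref{proposition-flat-pull-back-commutes-with-cycles-of-subschemes} to the closed subscheme $g^{-1}(\cl[z])\subseteq Y$ using flatness of $f$. Your closing remarks on why only flatness of $f$ is needed are accurate and consistent with the paper's surrounding discussion.
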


\begin{proof}
Let $z\in Z$ be a point. Then
$$(g\circ f)^{*,\textrm{naive}}(z)=\textrm{cycl}(f^{-1}g^{-1}(\overline{\{z\}}))$$
while
$$f^{*,\textrm{naive}}\circ g^{*,\textrm{naive}}(z)=f^{*,\textrm{naive}}(\textrm{cycl}(g^{-1}(\overline{\{z\}})).$$
Both cycles agree by \Cref{proposition-flat-pull-back-commutes-with-cycles-of-subschemes} as $f$ is flat.
\end{proof}

\medskip\noindent
As an example that flatness of $f$ is really needed in \Cref{proposition-functoriality-for-pull-back} one can take $g:Y\to Z=\Spec( k)$ a (noetherian) scheme over a field and $f:X=Y_\red\to Y$ the natural closed immersion.
Then in general
$$
\begin{matrix}
(g\circ f)^\naive(\cycl(Z))=\cycl(Y_\red)\\[1em]
\neq f^\naive(\cycl(Y))=f^\naive(g^\naive(\cycl(Z))).
\end{matrix}
$$
An example with smooth schemes is the following.

\begin{example}
\label{example-naive-pull-back-not-functorial}
We take $Y=\Spec(k[t,x]/(x^n-t))$ with $n\geq 2,$ $X=\Spec(k[t]/(t))=\mathrm{pt}$, $Z=\Spec(k[t])$ and as morphisms
$$
\begin{matrix}
f:X\to Y: t\mapsto (t,0)\\
g:Y\to Z: (t,x)\mapsto t.
\end{matrix}
$$
Let $\Zcal:=\cycl(k[t]/(t))\in \Zcal^*(Z)$ which is a cycle on $Z$.
Then
$$
(g\circ f)^\naive(\Zcal)=\cycl(k[t]/(t))\in \Zcal^*(X)
$$
while
$$
f^\naive(g^\naive(\Zcal))=n\cdot\cycl(k[t]/(t)).
$$
\end{example}

\Cref{proposition-functoriality-for-pull-back} is even wrong if $g$ is flat and $f$ a universal homeomorphism, therefore in particular generalizing.
We give an example.

\begin{example}
Let $X=\Spec(k[t])$ be the affine line over a field $k$ and $Y=\Spec(k[t^2,t^3])$, which is a curve with a cusp at the ideal $(t^2,t^3)$ and normalisation $X$.
We take $Z=\Spec(k[t^2])$ and $f:X\to Y$, $g:Y\to Z$ the morphisms given by the inclusions
$$
k[t^2]\subseteq k[t^2,t^3]\subseteq k[t].
$$
Then $g$ is flat and $f$ a universal homeomorphism.
Let
$$
\Zcal:=\cycl(k[t^2]/{(t^2)})\in \Zcal^*(Z).
$$
Then
$$
g^\naive(\Zcal)=\cycl(k[t^2,t^3]/(t^2))=2\cdot\cycl(k[t^2,t^3]/(t^2,t^3))
$$
hence
$$
f^\naive(g^\naive(\Zcal))=4\cdot\cycl(k[t]/(t)).
$$
On the other hand,
$$
(g\circ f)^\naive(\Zcal)=\cycl(k[t]/(t^2))=2\cdot\cycl(k[t]/(t)).
$$
This example can also be modified to show that the naive pull-back does not preserve rational equivalence, even for universal homeomorphisms.
In fact, the morphism $f:X\to Y$ extends uniquely to the canonical compactifications $X'$, $Y'$ of $X,Y$ giving a morphism $f:X'\to Y'$.
If we denote by $\infty$ the boundary of $X\subseteq X'$, resp.\ $Y\subseteq Y'$, then $f(\infty)=\infty$.
The zero divisor of $t^2\in k[t^2,t^3]$ is
$$
\Zcal:=2\cdot\cycl(k[t^2,t^3]/(t^2,t^3))-2\cdot\infty
$$
which has a pull-back
$$
f^\naive(\Zcal)=4\cdot\cycl(k[t]/(t))-2\cdot\infty
$$
not rationally equivalent to zero on $X'\cong \mathbb{P}^1_k$.
\end{example}

\medskip\noindent
\Cref{proposition-functoriality-for-pull-back} shows that the assignment $f\mapsto f^{*,\textrm{naive}}$ is functorial for \textit{flat} maps between noetherian schemes.
For a flat map $f$ we therefore abbreviate $f^{*,\textrm{naive}}$ by $f^*$ as this is the pull back of cycles usually encountered (for example in \cite{IntersectionTheory}).
However, in \Cref{proposition-functoriality-for-generalizing-and-generically-reduced-morphisms} and \Cref{lemma-naive-pull-back-for-cartesian-diagram} we investigate situations guaranteing functoriality for the naive pull-back.

\begin{proposition}
\label{proposition-f-surjective-implies-f*-injective}
Let $f:X\to Y$ be a surjective morphism of noetherian schemes.
Then
$$
f^{*,\textrm{naive}}:\mathcal{Z}^*(Y)\to\mathcal{Z}^*(X)
$$
is injective.
If in addition $f$ is generalizing, then conversely injectivity of $f^\naive$ implies surjectivity of $f$.
\end{proposition}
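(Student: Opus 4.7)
\medskip\noindent
\textbf{Plan.} The proof splits into two independent parts: the first uses a combinatorial ``maximal point'' argument together with sobriety of the underlying spaces, and the second uses the defining property of generalizing maps.

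For injectivity, suppose $\alpha=\sum_{i} n_i y_i \in \Zcal^*(Y)$ is a nonzero cycle with pairwise distinct $y_i$ and $n_i\neq 0$. Since this sum is finite, I can pick an index $i_0$ such that $\cl[y_{i_0}]$ is not strictly contained in any $\cl[y_j]$ with $j\neq i_0$. Using surjectivity of $f$, choose $x\in X$ with $f(x)=y_{i_0}$, and let $x_0$ be the generic point of an irreducible component of $f^{-1}(\cl[y_{i_0}])$ that contains $x$. A short sobriety argument then forces $f(x_0)=y_{i_0}$: by construction $f(x_0)\in\cl[y_{i_0}]$, while $x\in\cl[x_0]$ together with continuity of $f$ gives $y_{i_0}=f(x)\in\cl[f(x_0)]$, so the two points generalize each other and must coincide.

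It then follows directly from the definition of $f^\naive$ that $x_0$ contributes to $f^\naive(y_{i_0})$ with coefficient $\length(\O_{f^{-1}(\cl[y_{i_0}]),x_0})\ge 1$, and contributes $0$ to $f^\naive(y_j)$ for each $j\neq i_0$: a nonzero contribution would require $x_0\in f^{-1}(\cl[y_j])$, i.e.\ $y_{i_0}=f(x_0)\in\cl[y_j]$, i.e.\ $\cl[y_{i_0}]\subseteq\cl[y_j]$, violating the maximality of $i_0$. Hence the coefficient of $x_0$ in $f^\naive(\alpha)$ equals $n_{i_0}\cdot \length(\O_{f^{-1}(\cl[y_{i_0}]),x_0})\neq 0$, so $f^\naive(\alpha)\neq 0$.

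For the converse I argue by contraposition. Assume $f$ is generalizing and not surjective, and pick $y\in Y\setminus f(X)$. I claim $f^{-1}(\cl[y])=\emptyset$: any $x\in f^{-1}(\cl[y])$ would give $y\in\Spec(\O_{Y,f(x)})$, and the generalizing hypothesis yields a preimage of $y$ under the surjective map $\Spec(\O_{X,x})\to\Spec(\O_{Y,f(x)})$, contradicting $y\notin f(X)$. Consequently $f^\naive(y)=\cycl(\emptyset)=0$, while $y$ is a nonzero generator in $\Zcal^*(Y)$, so $f^\naive$ fails to be injective. The only real technical point is the sobriety step identifying $f(x_0)$ with $y_{i_0}$; once this is in hand, both implications follow by unwinding definitions.
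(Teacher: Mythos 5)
Your proof is correct and follows essentially the same strategy as the paper's: isolate an extremal point $y_{i_0}$ of the cycle (you use maximality for the specialization order, the paper uses minimality of codimension, which amounts to the same thing) and exhibit a generic point of $f^{-1}(\cl[y_{i_0}])$ lying over $y_{i_0}$ whose coefficient in $f^\naive(\alpha)$ cannot cancel, while your contrapositive treatment of the converse is equivalent to the paper's direct-sum decomposition of $f^\naive$ for generalizing $f$. If anything, your choice of $x_0$ as a generic point of $f^{-1}(\cl[y_{i_0}])$ whose closure meets the fibre $f^{-1}(y_{i_0})$, rather than a generic point of the fibre itself, is slightly more careful at the one delicate spot, namely ensuring that the chosen point really occurs in $\cycl(f^{-1}(\cl[y_{i_0}]))$ with positive coefficient.
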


\begin{proof}
We note that under the assumption that $f$ is generalizing the morphism $f^\naive$ is the direct sum over $y\in Y$ of the maps
$$
f^\naive_{|\Z y}:\Z y\to \bigoplus\limits_{x\in f^{-1}(y)}\Z x
$$
because for $y\in Y$ the generic points of the preimage $f^{-1}(\overline{\{y\}})$ all lie over $y$.
In particular, $f^\naive$ is injective if and only if each $f^\naive_{|\Z y}$ is injective.
But $f^\naive_{\Z y}$ is injective if and only if $f^{-1}(y)\neq \emptyset$.
Hence, all statements are in fact obvious if $f$ is generalizing. This said, we now turn to the general case by assuming that $f$ is only surjective, but not necessarily generalizing.
Let
$$
\Zcal:=\sum\limits_{i=1}^n m_iy_i\in \Zcal^*(Y)
$$
be a cycle with $f^\naive(\Zcal)=0$.
We may assume that the $y_i$ are pairwise distinct and $m_i\neq0$ for every $i$.
Moreover, we can arrange
$$
\codim(\cl[y_1],Y)\leq\ldots\leq\codim(\cl[y_{n}],Y).
$$
Let $x_1\in f^{-1}(y_1)$ be a generic point of the fibre $f^{-1}(y_1)$ (which exists because $f$ is surjective).
Because
$$
-m_1f^*(y_1)=\sum\limits_{i=2}^n m_if^*(y_i)
$$
there exists some $y_j,j\geq 2,$ such that $x_1$ is a generic point of $f^{-1}(\overline{\{y_j\}})$.
In particular, $y_1=f(x_1)$ is a specialisation of $y_j$ and as $\textrm{codim}(\overline{\{y_1\}},Y)\leq \textrm{codim}(\overline{\{y_j\}},Y)$ we get $y_j=y_1$, a contradiction.
\end{proof}

\medskip\noindent
We give an example, that the injectivity of $f^\naive$ does not necessarily imply that $f$ is surjective.

\begin{example}
\label{example-f*-injective-does-not-imply-f-surjective}
Let $Y:=\Spec(R)$ be the spectrum of a discrete valuation ring $R$ with residue field $k:=R/{\mathfrak{m}}$ and denote by $\eta, s=\Spec(k)$ its generic resp.\ special point.
Consider the schemes $X_1:=\Spec(R/{\mathfrak{m}^n})$ and $X_2:=s$ and let $x_i\in X_i$ be their unique points.
We define $f:X=X_1\coprod X_2\to Y$ as the natural morphism.
Then
$$
\begin{matrix}
f^\naive(\eta)=nx_1+x_2\\
f^\naive(s)=x_1+x_2.
\end{matrix}
$$
In particular, for $n\geq 2$ both cycles are linearly independant.
Hence,
$$
f^\naive:\Zcal^*(Y)\to \Zcal^*(X)$$
is injective although $f$ is not surjective.
\end{example}

\medskip\noindent
In general, the naive pull-back does not preserve the natural grading of $\Zcal^*(X)$ given by codimension.
With the notation of example \ref{example-f*-injective-does-not-imply-f-surjective}, one can take the natural morphism $s\coprod Y\to Y$.
More serious examples include blow-ups, for example that of points on surfaces.
However, in the case of a generalizing morphism this problem disappears as will be proven in \Cref{proposition-generalizing-morphisms-preserve-grading-by-codimension}.

\begin{proposition}
\label{proposition-generalizing-morphisms-preserve-grading-by-codimension}
Let $f:X\to Y$ be a generalizing morphism of noetherian schemes and $y\in Y$.
Then
$$
\codim(\cl[y],Y)=\codim(\cl[x],X)
$$
for every generic point $x\in f^{-1}(y)$.
In particular, the homomorphism
$$
f^{\naive}:\mathcal{Z}^*(Y)\to \mathcal{Z}^*(X)
$$
respects the grading by codimension.
\end{proposition}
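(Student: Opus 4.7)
The plan is to translate the codimension equality into a Krull-dimension equality of local rings and then establish the two opposite inequalities separately. Since $\codim(\cl[z], Z) = \dim \O_{Z,z}$ for any point $z$ of a noetherian scheme $Z$, it suffices to show $\dim \O_{X,x} = \dim \O_{Y,y}$. Throughout I write $A := \O_{Y,y}$ with maximal ideal $\mathfrak{m}$ and $B := \O_{X,x}$ with maximal ideal $\mathfrak{n}$; the morphism $f$ induces a local homomorphism of noetherian local rings $A \to B$.

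The hypothesis that $x$ is a generic point of the scheme-theoretic fibre $f^{-1}(y)$ is exactly the statement that $\mathfrak{n}$ is a minimal prime above $\mathfrak{m} B$, so the closed fibre ring $B/\mathfrak{m} B$ has Krull dimension zero. Combined with the classical inequality
\[
\dim B \leq \dim A + \dim(B/\mathfrak{m} B),
\]
which holds for an arbitrary local homomorphism of noetherian local rings (a standard system-of-parameters argument; see Matsumura, \emph{Commutative Ring Theory}, Theorem~15.1), this immediately yields $\dim B \leq \dim A$.

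For the opposite inequality I would unwind the generalizing hypothesis into the going-down theorem for $A \to B$: given any prime $\mathfrak{q} \subseteq \mathfrak{n}$ of $B$, corresponding to some generalization $x''$ of $x$ in $X$, and any prime $\mathfrak{p}' \subseteq \mathfrak{q} \cap A$ of $A$, corresponding to a generalization $y'$ of $y'' := f(x'')$ in $Y$, the generalizing property applied at the point $x''$ produces a generalization $x'$ of $x''$ with $f(x') = y'$, i.e.\ a prime $\mathfrak{q}' \subseteq \mathfrak{q}$ of $B$ with $\mathfrak{q}' \cap A = \mathfrak{p}'$. Starting from $\mathfrak{n}$ and a maximal chain $\mathfrak{p}_0 \subsetneq \cdots \subsetneq \mathfrak{p}_d = \mathfrak{m}$ of length $d = \dim A$, iterated going-down lifts the chain step by step to a strict chain of primes in $B$ ending at $\mathfrak{n}$, so $\dim B \geq \dim A$.

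Combining the two inequalities gives the codimension equality. The final assertion on the grading is then immediate: as already observed in the text, the generalizing property forces every generic point of $f^{-1}(\cl[y])$ to lie over $y$, and such a point is automatically a generic point of $f^{-1}(y)$ itself (because any strict generalization of an $x \in f^{-1}(y)$ staying inside $f^{-1}(\cl[y])$ must again lie over $y$), so each summand of $f^\naive(y) = \cycl(f^{-1}(\cl[y]))$ has codimension $\codim(\cl[y], Y)$ in $X$ by the equality just proved. The only non-trivial input is the dimension inequality $\dim B \leq \dim A + \dim(B/\mathfrak{m} B)$, which is precisely where the noetherian hypothesis is essentially used; the going-down step is essentially a bookkeeping iteration of the defining property of a generalizing morphism.
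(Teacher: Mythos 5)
Your proof is correct and follows essentially the same route as the paper: both reduce to showing $\dim\O_{X,x}=\dim\O_{Y,y}$, obtain $\dim B\le\dim A$ from the fact that the closed fibre $B/\mathfrak{m}B$ has dimension zero (the paper proves the inequality inline via ideals of definition, which is exactly the system-of-parameters argument behind the Matsumura reference you cite), and obtain $\dim B\ge\dim A$ by lifting chains of specialisations using the generalizing hypothesis. Your extra remark that generic points of $f^{-1}(\cl[y])$ lie over $y$ and are generic in $f^{-1}(y)$ is a small detail the paper leaves implicit, and it is handled correctly.
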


\begin{proof}
Let $A:=\O_{Y,y}$ resp.\ $B:=\O_{X,x}$ be the local rings at $y\in Y$ resp.\ $x\in X$ and let $\mathfrak{m}:=\mathfrak{m}_{Y,y}$ be the maximal ideals in $A$.
As $x\in f^{-1}(y)$ is a generic point, the ring
$$
B/{\mathfrak{m}B}
$$
is artinian.
More generally, the ring $B/{IB}$ is artinian for every $\mathfrak{m}$-primary ideal $I\subseteq A$. In other words, for every ideal of definition $I\subseteq A$, that is every $\mathfrak{m}$-primary ideal, the ideal $IB$ is an ideal of definition for $B$.
As the Krull dimension of any local noetherian ring can be computed as the minimal number of generators for ideals of definition, we can conclude
$$
\dim(B)\leq \dim(A).
$$
The morphism $f:\Spec(B)\to \Spec(A)$ is generalizing, hence every chain
$$
y_n\rightsquigarrow y_{n-1} \rightsquigarrow \ldots \rightsquigarrow y_0
$$
of specialisations in $\Spec(A)$ can be lifted to a chain
$$
x_n\rightsquigarrow x_{n-1} \rightsquigarrow \ldots \rightsquigarrow x_0
$$
of specialisations in $\Spec(B)$ with $f(x_i)=y_i$.
In particular, $\dim(B)\geq \dim(A)$ and therefore $\dim(B)=\dim(A)$.
By definition, $\dim(B)=\codim(\cl[x],X)$ resp.\ $\dim(A)=\codim(\cl[y],Y)$ and the proposition is proven.
\end{proof}

\medskip\noindent
In some easy cases functoriality can be checked directly.
\begin{lemma}
\label{lemma-descent-sequences-compatible-with-zariski-localization}
Let $Y$ be a noetherian scheme and let $f:X\to Z$ be a morphism of noetherian schemes over $Y$.
Let $g: Y'\to Y$ be an open immersion or the Zariski localization $Y'=\Spec(\O_{Y,y})$ at a point $y\in Y$.
Then the diagram
$$
\xymatrix{
\Zcal^*(Z) \ar[r]^{f^\naive}\ar[d]^{p^*}& \Zcal^*(X)\ar[d]^{q^*} \\
\Zcal^*(Z')\ar[r]^{{f'}^\naive} & \Zcal^*(X')
}
$$
commutes, where
$$
f':X':=X\times_Y Y'\to Z':=Z\times_Y Y'
$$
denote the base change of $f$ to $Y'$ and $p:X'\to X$ resp.\ $q:Z'\to Z$ the natural projections.
\end{lemma}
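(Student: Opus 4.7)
The plan is to reduce everything to the flat pull-back case. The key observation is that $g:Y'\to Y$ is flat in both alternatives of the hypothesis (open immersions are flat, and Zariski localizations $\Spec(\O_{Y,y})\to Y$ are flat), so its base changes $p:X'\to X$ and $q:Z'\to Z$ are flat as well. Hence \Cref{proposition-flat-pull-back-commutes-with-cycles-of-subschemes} applies to both vertical maps.

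I would check commutativity on generators. Pick $z\in Z$. On the one hand,
$$
p^{*,\naive}(f^\naive(z))=p^{*,\naive}(\cycl(f^{-1}(\cl[z])))=\cycl(p^{-1}(f^{-1}(\cl[z])))
$$
by flatness of $p$. Since the square involving $X,X',Z,Z'$ is cartesian (this is just the base change $X'=X\times_Y Y'=X\times_Z Z'$ and similarly for the projection), I then identify
$$
p^{-1}(f^{-1}(\cl[z]))=(f')^{-1}(q^{-1}(\cl[z]))
$$
as closed subschemes of $X'$.

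The decisive point is to analyze $q^{-1}(\cl[z])$ as a scheme. In both cases ($q$ an open immersion or a Zariski localization at a point $y_0$ with $z_0=y_0$ suitably placed), a point of $Z'$ mapping to $z$ corresponds either to nothing (if $z$ is not in the image of $q$) or to a unique point $z'\in Z'$ whose local ring is canonically isomorphic to $\O_{Z,z}$. If $z$ is not in the image of $q$, then $q^{-1}(\cl[z])=\emptyset$, so $q^{*,\naive}(z)=0$ and likewise $(f')^{-1}(q^{-1}(\cl[z]))=\emptyset$, hence both sides of the diagram vanish on $z$. If $z$ is in the image, then $q^{-1}(\cl[z])=\cl[z']$ as reduced closed subschemes of $Z'$ (for open immersions this is immediate; for Zariski localizations it follows because the prime ideal of $\O_{Z,z_0}$ cutting out the pull-back is precisely the prime corresponding to $z$), and $q^{*,\naive}(z)=z'$ with multiplicity one, so
$$
{f'}^\naive(q^{*,\naive}(z))=\cycl((f')^{-1}(\cl[z']))=\cycl((f')^{-1}(q^{-1}(\cl[z])))=p^{*,\naive}(f^\naive(z)).
$$

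No step is really an obstacle; the only mildly delicate point is the verification that the scheme-theoretic pull-back of the reduced subscheme $\cl[z]$ along $q$ remains reduced (and is $\cl[z']$), which however is clear in both permitted cases since $q$ locally either restricts to an open or is the localization morphism of rings, both of which preserve reducedness and carry prime ideals to prime ideals.
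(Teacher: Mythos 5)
Your proof is correct and follows essentially the same route as the paper's: check commutativity on a generator $z\in Z$, use that the projections are flat to invoke \Cref{proposition-flat-pull-back-commutes-with-cycles-of-subschemes}, identify the two scheme-theoretic preimages via the cartesian square, and observe that the pull-back of $\cl[z]$ to $Z'$ is the reduced closure of the corresponding point. The one step you assert without justification---that $q^{-1}(\cl[z])=\emptyset$ when $z$ is not in the image---is the same point the paper dispatches in a line: $Z'$ is stable under generalization, so no specialization of $z$ can lie in $Z'$ unless $z$ itself does, whence $\cl[z]\cap Z'=\emptyset$.
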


\begin{proof}
Let $z\in Z$ be a point.
If $z\notin Z'$, then the claim is immediate as $Z'$ is stable under generalizations.
We assume $z\in Z'$ and denote by $\cl[z]_Z$ the closure of $z$ in $Z$.
Then $\cl[z]_Z\cap Z'=\cl[z]_{Z'}$ is the (reduced) closure of $z$ in $Z'$ and therefore the diagram
$$
\xymatrix{
\cl[z]_{Z'} \ar[r]\ar[d] & \cl[z]_Z\ar[d] \\
Z'\ar[r]^p & Z
}
$$
is cartesian. Base change along $f:X\to Z$ yields the cartesian diagram
$$
\xymatrix{
{f'}^{-1}(\cl[z]_{Z'}) \ar[r]\ar[d] & f^{-1}(\cl[z]_Z\ar[d]) \\
X'\ar[r]^q & X
}
$$
and as $q:X'\to X$ is flat, we can conclude by \Cref{proposition-flat-pull-back-commutes-with-cycles-of-subschemes}
\begin{multline*}
q^*(f^\naive(z))=q^*(\cycl(f^{-1}(\cl[z]_Z)))=\cycl(q^{-1}(f^{-1}(\cl[z]_Z))) \\[0.5em]
= \cycl({f'}^{-1}(p^{-1}(\cl[z]_Z)))= \cycl({f'}^{-1}(\cl[z]_{Z'})))={f'}^\naive(z)={f'}^\naive(p^*(z)).
\end{multline*}
We did some abuse of notation to denote by $z$ the cycles $z\in \Zcal^*(Z)$ and $p^*(z)=z\in \Zcal^*(Z')$.
\end{proof}

\noindent
As a special case of \Cref{lemma-descent-sequences-compatible-with-zariski-localization} we note that if $f:X\to Y$ is a morphism of noetherian schemes such that $X\times_Y X$ is again noetherian and $g:Y'\to Y$ as in \Cref{lemma-descent-sequences-compatible-with-zariski-localization}, then the diagram
\begin{equation}
\label{equation-descent-sequence-compatible-with-zariski-localization}
\xymatrixcolsep{7pc}
\xymatrix{
\Zcal^*(Y) \ar[d]^{g^*}\ar[r]^{f^\naive} & \Zcal^*(X) \ar[d]^{{g'}^*}\ar[r]^{\pr_1^\naive-\pr_2^\naive} & \Zcal^*(X\times_Y X)\ar[d]^{(g'\times g')^*} \\
\Zcal^*(Y') \ar[r]^{{f'}^\naive} & \Zcal^*(X') \ar[r]^{{\pr'}_1^\naive-{\pr'}_2^\naive} & \Zcal^*(X'\times_{Y'} X')
}
\end{equation}
with obvious notations commutes.

\noindent\medskip
In \Cref{proposition-functoriality-for-generalizing-and-generically-reduced-morphisms} we will partly generalize \Cref{lemma-descent-sequences-compatible-with-zariski-localization}.

\section{Push forward of cycles for closed immersions}

\noindent
We will need some limited covariant functoriality of cycle groups.

\begin{definition}
Let $Y$ be a noetherian scheme and let $f:X\to Y$ be a closed immersion.
We define (following for example \cite{IntersectionTheory}) the push-forward along $f$ by
$$
f_*:\Zcal^*(X)\to \Zcal^*(Y):\sum m_xx\mapsto\sum m_xf(x).
$$
\end{definition}

\begin{proposition}
\label{proposition-push-forward-commutes-with-cycles-of-subschemes}
Let $Y$ be a noetherian scheme and let $f:X\to Y$ be a closed immersion.
Then for all closed subschemes $Z\subseteq X$ we have the equality
$$
f_*(\cycl(Z))=\cycl(f(Z)).
$$
\end{proposition}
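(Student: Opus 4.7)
The plan is to reduce the statement to the observation that a closed immersion induces an isomorphism between $Z$ and its scheme-theoretic image $f(Z)$ inside $Y$, under which generic points correspond to generic points and local rings correspond to local rings.

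First I would note that the composition $f|_Z \colon Z \hookrightarrow X \hookrightarrow Y$ of two closed immersions is again a closed immersion, so $f(Z)$ carries a canonical structure of closed subscheme of $Y$, and the map $f|_Z$ is an isomorphism of schemes $Z \xrightarrow{\sim} f(Z)$. In particular, $f$ restricts to a bijection between the generic points of $Z$ and the generic points of $f(Z)$, and for each generic point $x \in Z$ it induces an isomorphism of local rings
\[
\O_{f(Z), f(x)} \xrightarrow{\sim} \O_{Z, x}.
\]

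Second, I would just unwind the definitions. Writing $\cycl(Z) = \sum_{x} \length(\O_{Z,x})\, x$, where $x$ ranges over the generic points of $Z$, we get
\[
f_*(\cycl(Z)) \;=\; \sum_{x} \length(\O_{Z,x})\, f(x).
\]
By the first step, as $x$ runs through generic points of $Z$, the points $f(x)$ run through all generic points of $f(Z)$, and $\length(\O_{Z,x}) = \length(\O_{f(Z),f(x)})$. Substituting, the right-hand side equals $\cycl(f(Z))$.

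There is really no hard step here, only bookkeeping. The one point to verify carefully is that the scheme structure on $f(Z) \subseteq Y$ implicit in the statement is the one making $f|_Z \colon Z \to f(Z)$ an isomorphism; this is automatic for closed immersions, and it is why the lengths at corresponding generic points agree. This is exactly the ingredient used in \cite[\S 1.4]{IntersectionTheory}, and I would cite that for the final identification if desired.
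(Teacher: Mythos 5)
Your proof is correct and follows essentially the same route as the paper's: both arguments use that $f$ restricts to an isomorphism $Z\to f(Z)$, so generic points correspond and the lengths of the local rings at corresponding generic points agree, after which the identity follows by unwinding the definition of $\cycl$. Your version merely makes explicit the (correct) point that the scheme structure on $f(Z)$ is the one induced by the closed immersion $Z\hookrightarrow Y$.
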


\begin{proof}
The morphism $f:Z\to f(Z)$ is an isomorphism, hence generic points are mapped to each other.
As for such a generic point $z\in Z$ the lengths
$$
\length(\O_{Z,z})=\length(\O_{f(Z),f(z)})
$$
agree, the claim follows.
\end{proof}

\begin{proposition}
\label{proposition-push-forward-commutes-with-naive-pull-back}
Let $f:X\to Y$ be a morphism of noetherian schemes.
Then for all closed immersions $i:Z\to Y$ the diagram
$$
\xymatrix{
\Zcal^*(Z) \ar[d]^{{f'}^\naive}\ar[r]^{i_*} & \Zcal^*(Y) \ar[d]^{f^\naive} \\
\Zcal^*(f^{-1}(Z))\ar[r]^{i'_*} & \Zcal^*(X)
}
$$
is commutative, where $i':f^{-1}(Z)\to X$ resp.\ $f':f^{-1}(Z)\to Z$ denote the base change of $i$ resp.\ $f$.
\end{proposition}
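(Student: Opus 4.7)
The plan is to check commutativity on a generator $z\in Z$ and then extend by linearity, so I would begin by fixing $z\in Z$ and unpacking both compositions. Going along the top and then down, I get
$$
f^\naive(i_*(z))=f^\naive(i(z))=\cycl\bigl(f^{-1}(\cl[i(z)]_Y)\bigr),
$$
and going down and then along the bottom, I get
$$
i'_*({f'}^\naive(z))=i'_*\bigl(\cycl({f'}^{-1}(\cl[z]_Z))\bigr).
$$
So the task is to identify these two cycles in $\Zcal^*(X)$.

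The key scheme-theoretic observation I would exploit is that, because $i:Z\hookrightarrow Y$ is a closed immersion, the reduced closure of $i(z)$ in $Y$ is the image of the reduced closure of $z$ in $Z$, i.e.\ $\cl[i(z)]_Y=i(\cl[z]_Z)$ as reduced closed subschemes of $Y$. Now base-changing the closed immersion $i:\cl[z]_Z\hookrightarrow Y$ along $f$ is compatible with first base-changing $i:Z\hookrightarrow Y$ to $i':f^{-1}(Z)\hookrightarrow X$ and then pulling back $\cl[z]_Z$ along $f':f^{-1}(Z)\to Z$. In other words, the scheme-theoretic preimage $f^{-1}(\cl[i(z)]_Y)$ coincides, as a closed subscheme of $X$, with the image under $i'$ of the closed subscheme ${f'}^{-1}(\cl[z]_Z)$ of $f^{-1}(Z)$.

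With this identification in place, I would finish by invoking \Cref{proposition-push-forward-commutes-with-cycles-of-subschemes} for the closed immersion $i'$ applied to the closed subscheme ${f'}^{-1}(\cl[z]_Z)\subseteq f^{-1}(Z)$:
$$
i'_*\bigl(\cycl({f'}^{-1}(\cl[z]_Z))\bigr)=\cycl\bigl(i'({f'}^{-1}(\cl[z]_Z))\bigr)=\cycl\bigl(f^{-1}(\cl[i(z)]_Y)\bigr),
$$
which is exactly the other composition evaluated at $z$.

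The only genuinely nontrivial step is the scheme-theoretic identification in the middle paragraph, and the main subtlety there is to be careful that the preimage $f^{-1}(-)$ denotes the scheme-theoretic pull-back of a \emph{reduced} closed subscheme (as in \Cref{definition-naive-pull-back}); the claim then amounts to the trivial compatibility of the two cartesian squares
$$
\xymatrix{
{f'}^{-1}(\cl[z]_Z)\ar[r]\ar[d] & f^{-1}(Z)\ar[r]^{i'}\ar[d]^{f'} & X\ar[d]^f \\
\cl[z]_Z\ar[r] & Z\ar[r]^i & Y,
}
$$
whose outer rectangle is also cartesian. Everything else is formal bookkeeping, so I do not anticipate any real obstacle.
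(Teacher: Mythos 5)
Your argument is correct and follows essentially the same route as the paper's proof: evaluate both compositions on a point $z$, identify $f^{-1}(\cl[i(z)])$ with $i'({f'}^{-1}(\cl[z]))$ via the compatibility of the two cartesian squares, and conclude with \Cref{proposition-push-forward-commutes-with-cycles-of-subschemes}. You merely spell out the scheme-theoretic identification in more detail than the paper does.
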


\begin{proof}
Let $z\in Z$ be a point.
Then
$$
f^\naive(i_*(z))=\cycl(f^{-1}(\overline{\{i(z)\}}))=\cycl(i'({f'}^{-1}(\overline{\{z\}}))).
$$
By \Cref{proposition-push-forward-commutes-with-cycles-of-subschemes}, this equals
$$
i'_*(\cycl({f'}^{-1}(\cl[z])))=i'_*({f'}^\naive(z))
$$
and the proof is finished.
\end{proof}

\noindent
Recall that a morphism $f:X\to Y$ is called weakly immersive if $f$ is a homeomorphism onto its image and for each $x\in X$ the induced morphism $f:\O_{Y,f(x)}\to \O_{X,x}$ is surjective.

\begin{lemma}
\label{lemma-generalizing-stable-under-base-change-along-immersions}
Let $f:X\to Y$ be a generalizing morphism of schemes and assume that $g:Y'\to Y$ is weakly immersive. Then the base change 
$$
f':X':=X\times_YY'\to Y'
$$ 
is again generalizing.
\end{lemma}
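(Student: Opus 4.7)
The plan is to describe the local ring $\O_{X',x'}$ at an arbitrary point $x' \in X'$ explicitly as a quotient of $\O_{X,x}$, where $x := \pr_X(x')$, and then read off the generalizing property of $f'$ at $x'$ directly from that of $f$ at $x$.

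Fix $x' \in X'$ and set $x := \pr_X(x')$, $y' := \pr_{Y'}(x')$, and $y := f(x) = g(y') \in Y$. Since $g$ is weakly immersive, the local homomorphism $\O_{Y,y}\to \O_{Y',y'}$ is surjective; denote its kernel by $I$. A surjective local homomorphism is the quotient by an ideal contained in the maximal ideal, so it induces an isomorphism on residue fields $\kappa(y)\xrightarrow{\sim}\kappa(y')$. In particular, the scheme-theoretic fibre of $X'\to X\times Y'$ over $(x,y')$ equals $\Spec(\kappa(x)\otimes_{\kappa(y)}\kappa(y'))=\Spec(\kappa(x))$, so $x'$ is the \emph{unique} point of $X'$ above $(x,y')$ and $\kappa(x')=\kappa(x)$.

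Next I would identify
$$
\O_{X',x'} \;\cong\; \O_{X,x}\otimes_{\O_{Y,y}}\O_{Y',y'} \;=\; \O_{X,x}/I\O_{X,x}.
$$
The right-hand side is a quotient of the local ring $\O_{X,x}$, hence local with maximal ideal the image of $\mathfrak{m}_{X,x}$ and residue field $\kappa(x)$; by the residue-field computation above, its unique closed point corresponds to $x'$, so the canonical map from the tensor product to the stalk $\O_{X',x'}$ (which is a priori a further localisation) is an isomorphism. Under this identification, $\Spec(\O_{X',x'})=V(I\O_{X,x})\subseteq\Spec(\O_{X,x})$ and $\Spec(\O_{Y',y'})=V(I)\subseteq\Spec(\O_{Y,y})$, and the map $\Spec(\O_{X',x'})\to\Spec(\O_{Y',y'})$ is simply the restriction of $\Spec(\O_{X,x})\to\Spec(\O_{Y,y})$ to these closed subsets.

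Surjectivity of this restricted map is then immediate: given any $\mathfrak{q}\in V(I)$, the generalizing hypothesis on $f$ at $x$ yields a prime $\mathfrak{p}\in\Spec(\O_{X,x})$ with preimage $\mathfrak{q}$; since $\mathfrak{q}\supseteq I$, the containment $\mathfrak{p}\supseteq I\O_{X,x}$ holds automatically, so $\mathfrak{p}\in V(I\O_{X,x})$. I expect the main obstacle to be the identification of $\O_{X',x'}$ with $\O_{X,x}/I\O_{X,x}$ in the first step; this hinges on the uniqueness of the point of $X'$ over $(x,y')$, which in turn uses the isomorphism $\kappa(y)\cong\kappa(y')$ forced by weak immersion. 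Once that is in place, the generalizing property simply descends along the quotient.
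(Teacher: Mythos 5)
Your proposal is correct and follows essentially the same route as the paper's proof: both identify $\O_{X',x'}\cong\O_{X,x}/I\O_{X,x}$ with $I=\Ker(\O_{Y,y}\to\O_{Y',y'})$ and then deduce surjectivity of $\Spec(\O_{X',x'})\to\Spec(\O_{Y',y'})$ from that of $\Spec(\O_{X,x})\to\Spec(\O_{Y,y})$ by restricting to $V(I)$. The only difference is that you spell out the justification (via the residue-field isomorphism and uniqueness of the point over $(x,y')$) for the identification of the stalk, which the paper takes for granted.
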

\begin{proof}
Let $x'\in X'$ be a point over $y'=f'(x')$ and $x=g'(x')$, $g':X'\to X$. Let $y:=f(x)=g(y')$. If $I:=\Ker(\O_{Y,y}\to \O_{Y',y'})$ denotes the kernel, then 
$$
\O_{X',x'}\cong \O_{X,x}/{I\O_{X,x}}
$$
and hence the diagram
$$
\xymatrix{
\Spec(\O_{X',x'}) \ar[r]\ar[d] & \Spec(\O_{Y',y'})\ar[d] \\
\Spec(\O_{X,x}) \ar[r] & \Spec(\O_{Y,y})
}
$$
cartesian. In particular, the upper arrow is surjective as $f$ is generalizing, showing that $f'$ is generalizing, too.
\end{proof}

\noindent
The next result will not be used in the sequel but seems interesting in its own right.
Namely, we will partly generalize \Cref{proposition-functoriality-for-pull-back} to generalizing morphisms, which are generically reduced.

\begin{proposition}
\label{proposition-functoriality-for-generalizing-and-generically-reduced-morphisms}
Let $f:X\to Y$, $g:Y\to Z$ be morphisms of noetherian schemes. Assume that $f$ is generalizing and that $g$ is generalizing with generically reduced fibres. Then
$$
f^\naive\circ g^\naive= (g\circ f)^\naive.
$$
\end{proposition}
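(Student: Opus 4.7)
The plan is to verify the identity on a generator $z \in Z$. Set $W := g^{-1}(\cl[z])$, the scheme-theoretic preimage (inside $Y$) of the reduced closed subscheme $\cl[z] \subseteq Z$. Since scheme-theoretic preimages compose, $(g \circ f)^{-1}(\cl[z]) = f^{-1}(W)$, so $(g \circ f)^\naive(z) = \cycl(f^{-1}(W))$. The task is thus to match this cycle with $f^\naive(g^\naive(z))$.

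First I would analyze $g^\naive(z) = \cycl(W)$. Since $g$ is generalizing, each generic point $y$ of $W$ lies over $z$, and is simultaneously a generic point of the fibre $g^{-1}(z)$. Because $\cl[z]$ is reduced, the stalk $\O_{W,y}$ coincides with the stalk at $y$ of the fibre $g^{-1}(z)$; by the generically reduced hypothesis this is a field. Consequently
$$g^\naive(z) = y_1 + \ldots + y_n,$$
with $y_1,\ldots,y_n$ the pairwise distinct generic points of $W$, each with multiplicity one. Applying $f^\naive$ yields $f^\naive(g^\naive(z)) = \sum_i \cycl(f^{-1}(\cl[y_i]))$.

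Second I would compute $\cycl(f^{-1}(W))$ directly. Using that $f$ is generalizing, I would argue that the generic points of $f^{-1}(W)$ form exactly the disjoint union, over $i$, of the generic points of $f^{-1}(\cl[y_i])$, each lying over the corresponding $y_i$ (the argument is by contradiction: a strict specialization $y_i \rightsquigarrow f(x)$ for $x$ generic in $f^{-1}(W)$ could be lifted via $f$ to a strict generalization of $x$ inside $f^{-1}(|W|)$, violating genericity). For such $x$ over $y_i$, the fibre-product stalk computes as
$$\O_{f^{-1}(W),x} \;=\; \O_{X,x} \otimes_{\O_{Y,y_i}} \O_{W,y_i} \;=\; \O_{X,x}/\mathfrak{m}_{y_i}\O_{X,x}$$
because $\O_{W,y_i}$ is a field; this coincides with $\O_{f^{-1}(\cl[y_i]),x}$ since $\cl[y_i]$ is reduced. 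Summing then gives $\cycl(f^{-1}(W)) = \sum_i \cycl(f^{-1}(\cl[y_i])) = f^\naive(g^\naive(z))$.

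The main obstacle is the multiplicity bookkeeping: without the generically reduced fibre hypothesis the stalks $\O_{W,y_i}$ would contribute additional lengths that are not recovered by pulling back $\cl[y_i]$ along $f$ (this is precisely the obstruction seen in \Cref{example-naive-pull-back-not-functorial}). The generic-reducedness hypothesis collapses $\O_{W,y_i}$ to a field, which both simplifies $g^\naive(z)$ to a sum with unit coefficients and makes the base-change identification of stalks at generic points of $f^{-1}(W)$ exact, thereby restoring functoriality.
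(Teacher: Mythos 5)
Your proof is correct, and it takes a genuinely more direct route than the paper's. The paper first reduces, via \Cref{lemma-generalizing-stable-under-base-change-along-immersions}, \Cref{proposition-push-forward-commutes-with-naive-pull-back} and \Cref{lemma-descent-sequences-compatible-with-zariski-localization}, to the case where $Z$ is the spectrum of a field and $Y$ is replaced by the disjoint union of the spectra of its local rings at its generic points; generic reducedness then turns $Y$ into a finite product of fields, so that $g^\naive(\eta)=\cycl(Y)=\sum_y y$ and the composite identity becomes the tautology $\cycl(X)=\cycl(X)$. You instead verify the identity on a generator $z\in Z$ by a direct stalk comparison: transitivity of scheme-theoretic preimages gives $(g\circ f)^{-1}(\cl[z])=f^{-1}(W)$ with $W=g^{-1}(\cl[z])$, the generalizing hypotheses identify the generic points of $f^{-1}(W)$ with the disjoint union over $i$ of the generic points of the $f^{-1}(\cl[y_i])$, and generic reducedness collapses each $\O_{W,y_i}$ to the field $k(y_i)$, so that $\O_{f^{-1}(W),x}\cong\O_{f^{-1}(\cl[y_i]),x}$ at every relevant $x$. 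Both arguments isolate the same key point --- that reducedness of the fibres of $g$ at their generic points forces unit multiplicities in $g^\naive(z)$ and makes the inclusion $\cl[y_i]\subseteq W$ an isomorphism on local rings at $y_i$ --- but yours is self-contained and avoids the paper's reduction machinery entirely, at the cost of doing the multiplicity bookkeeping by hand, whereas the paper's version is shorter because it reuses compatibility lemmas already established for other purposes.
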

\begin{proof}
We have to show $f^\naive\circ g^\naive(\Zcal)=(g\circ f)^\naive(\Zcal)$ for every cycle $\Zcal\in \Zcal^*(Z)$. By \ref{lemma-generalizing-stable-under-base-change-along-immersions} we may assume, using \Cref{proposition-push-forward-commutes-with-naive-pull-back}, that $Z$ is integral and $\Zcal=\eta$ for the generic point $\eta\in Z$. By \Cref{lemma-descent-sequences-compatible-with-zariski-localization} (and \ref{lemma-generalizing-stable-under-base-change-along-immersions}) we may further assume $Z=\Spec(k(\eta))$ as all generic points in $Y$ and $X$ lie over $\eta$ by assumption.
We have to check an equality of cycles whose components all lie over generic points of $Y$. Hence, we may replace $Y$ by the disjoint union of the spectra of the local rings at  the generic points of $Y$, arriving at a situation, where $Y$ is the spectrum of a finite product of fields as $Y$ is generically reduced. We can conclude 
$$
g^\ast(\eta)=\cycl(Y)=\sum\limits_{y\in Y}y
$$ 
and hence
$$
f^\naive(g^\naive(\eta))=f^\naive(\cycl(Y))=\cycl(X)=(g\circ f)^\naive(\eta)
$$
as $X=\coprod\limits_{y\in Y}f^{-1}(y)$ with each $f^{-1}(y)$ open. Therefore, we are finished with the proof.
\end{proof}

\noindent
We can record another situation, where the naive pull-back of cycles does not encounter problems.

\begin{lemma}
\label{lemma-naive-pull-back-for-cartesian-diagram}
Consider a cartesian diagram
$$
\xymatrix{
X'\ar[r]^p\ar[d]^q & Y'\ar[d]^g \\
X\ar[r]^f & Y
}
$$
of morphisms of noetherian schemes with all morphisms generalizing. Then
$$
p^\naive\circ g^\naive = (g\circ p)^\naive=(f\circ q)^\naive =q^\naive\circ f^\naive.
$$
\end{lemma}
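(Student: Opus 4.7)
My plan is to reduce the quadruple equality to the single identity $p^\naive\circ g^\naive=(g\circ p)^\naive$: the middle equality $(g\circ p)^\naive=(f\circ q)^\naive$ is tautological since $g\circ p=f\circ q$, and the outer equalities are interchanged by the symmetry of the cartesian square under $p\leftrightarrow q$ and $f\leftrightarrow g$. Fix $y\in Y$. Because $g$ and $g\circ p$ are generalizing, the generic points of $g^{-1}(\cl[y])$ resp.\ $(g\circ p)^{-1}(\cl[y])$ all lie over $y$ and hence coincide with those of the open subschemes $Y'_y:=g^{-1}(y)$ resp.\ $X'_y:=(g\circ p)^{-1}(y)$, with the same local rings. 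Writing $y'_1,\ldots,y'_n$ for the generic points of $Y'_y$ and $m_i:=\length(\O_{Y'_y,y'_i})$, the two sides of the desired identity become $p^\naive(g^\naive(y))=\sum_i m_i\cdot\cycl(p^{-1}(\cl[y'_i]))$ and $(g\circ p)^\naive(y)=\cycl(X'_y)$.

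The key idea is that the cartesian property gives $X'_y=X_y\times_{\kappa(y)}Y'_y$, so the restricted projection $h:X'_y\to Y'_y$ is the base change of the morphism $X_y\to\Spec(\kappa(y))$; the latter is trivially flat (its target is a field spectrum), and hence so is $h$. This lets me invoke \Cref{proposition-flat-pull-back-commutes-with-cycles-of-subschemes} for $h$ and, expanding $\cycl(Y'_y)=\sum_i m_iy'_i$, obtain
$$
\cycl(X'_y)=h^\naive(\cycl(Y'_y))=\sum_i m_i\cdot\cycl(h^{-1}(\cl[y'_i]_{Y'_y})).
$$

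It remains to match each summand with $\cycl(p^{-1}(\cl[y'_i]))$. The cartesian property again yields $h^{-1}(y'_i)=X_y\times_{\kappa(y)}\kappa(y'_i)=p^{-1}(y'_i)$ as schemes, and this common open contains every generic point of both $h^{-1}(\cl[y'_i]_{Y'_y})$ and $p^{-1}(\cl[y'_i])$ with matching local rings---for the former because $y'_i$ is the generic point of the irreducible topological space $\cl[y'_i]_{Y'_y}$, and for the latter because $p$ is generalizing. The main obstacle is precisely that none of $f,g,p,q$ is assumed flat, so \Cref{proposition-flat-pull-back-commutes-with-cycles-of-subschemes} is not directly available; the device that overcomes this is the reduction to the fibre morphism $h$, whose flatness follows for free from the cartesian property.
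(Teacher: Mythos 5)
Your proof is correct, and it rests on the same key observation as the paper's argument --- namely that after passing to the fibre over a point $y$ the base becomes the field $k(y)$, so the relevant projection is flat and \Cref{proposition-flat-pull-back-commutes-with-cycles-of-subschemes} becomes available --- but you execute the reduction quite differently. The paper invokes its reduction machinery (\Cref{lemma-generalizing-stable-under-base-change-along-immersions}, \Cref{proposition-push-forward-commutes-with-naive-pull-back} and \Cref{lemma-descent-sequences-compatible-with-zariski-localization}) to literally replace $Y$ by $\Spec(k(y))$, after which all four morphisms in the square are flat and \Cref{proposition-functoriality-for-pull-back} finishes the proof in one line. You instead keep the original schemes and do the bookkeeping by hand: you identify the generic points and local rings of $g^{-1}(\cl[y])$ with those of the fibre $g^{-1}(y)$, extract the flat morphism $h:X'_y\to Y'_y$ from the cartesian property, and match the resulting subscheme cycles back inside $X'$. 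This buys a more self-contained argument (only \Cref{proposition-flat-pull-back-commutes-with-cycles-of-subschemes} is needed, and for the single identity $p^\naive\circ g^\naive=(g\circ p)^\naive$ only $g$, $p$ and $g\circ p$ need be generalizing), at the price of more explicit verification; your symmetry reduction of the quadruple equality to one identity is also a clean touch the paper leaves implicit. Two small imprecisions, neither fatal: the subsets $g^{-1}(y)\subseteq g^{-1}(\cl[y])$ and $p^{-1}(y'_i)\subseteq p^{-1}(\cl[y'_i])$ are in general not \emph{open} subschemes but only pro-open (localizations) --- which is all you actually use, since local rings are preserved; and the fact that every generic point of $h^{-1}(\cl[y'_i]_{Y'_y})$ lies over $y'_i$ requires that $h$ be generalizing, which you do have because $h$ is flat, not merely that $\cl[y'_i]_{Y'_y}$ is irreducible with generic point $y'_i$.
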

\begin{proof}
Take $y\in Y$. As $f,g,p,q$ are generalizing, we can, by \Cref{lemma-generalizing-stable-under-base-change-along-immersions}, \Cref{proposition-push-forward-commutes-with-naive-pull-back} and \Cref{lemma-descent-sequences-compatible-with-zariski-localization} assume that $Y=\Spec(k(y))$ is a point. Then $f$,$g$ and hence $p$,$q$ are flat and the claim follows from \Cref{proposition-functoriality-for-pull-back}.  
\end{proof}

\section{Descent of algebraic cycles}

\noindent
In this section let $f:X\to Y$ be a surjective morphism of noetherian schemes.
We assume that $X\times_Y X$ is again noetherian\footnote{which excludes for example pro\'etale covers like $\Spec(k^{\textrm{sep}})\to \Spec(k)$ for (most) fields $k$} and denote by
$$
\pr_i:X\times_Y X\to X,\ i=1,2,
$$
the two projections.

\medskip\noindent
In the absence of flatness the following lemma is not obvious.

\begin{lemma}
\label{lemma-descent-sequence-is-a-complex}
If $f$ is universally generalizing, the sequence of homomorphisms
\begin{equation}
\label{equation-descent-sequence-for-cycles}
\xymatrixcolsep{4pc}
\xymatrix{
0\ar[r] & \Zcal^*(Y)\ar[r]^{f^\naive} & \Zcal^*(X)\ar[r]^{\pr_1^\naive-\pr_2^\naive} &  \Zcal^*(X\times_Y X)
}
\end{equation}
is a complex.
\end{lemma}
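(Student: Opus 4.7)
The plan is to derive the complex property as a direct consequence of \Cref{lemma-naive-pull-back-for-cartesian-diagram}, applied to the cartesian square of $f$ with itself:
$$
\xymatrix{
X\times_Y X \ar[r]^{\pr_2}\ar[d]^{\pr_1} & X\ar[d]^f \\
X\ar[r]^f & Y.
}
$$
To invoke that lemma I have to verify that all four morphisms in the square are generalizing. This is where the hypothesis on $f$ enters twice: $f$ itself is generalizing (since ``universally generalizing'' implies ``generalizing''), and each of the two projections $\pr_1, \pr_2$ is the base change of $f$ along $f$, hence generalizing by the universal hypothesis. Both copies of $X\times_Y X$ are noetherian by assumption, so the naive pull-backs involved are well-defined.

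Granting this, \Cref{lemma-naive-pull-back-for-cartesian-diagram} yields the chain of equalities
$$
\pr_1^\naive \circ f^\naive \;=\; (f\circ \pr_1)^\naive \;=\; (f\circ \pr_2)^\naive \;=\; \pr_2^\naive \circ f^\naive,
$$
where the middle equality is simply the commutativity of the cartesian square. Subtracting gives $(\pr_1^\naive - \pr_2^\naive)\circ f^\naive = 0$, which is the assertion that the displayed sequence is a complex. The $0\to \Zcal^*(Y)$ part is trivially compatible, so no further verification is required.

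I do not anticipate any real obstacle here, since the substantive content has already been placed inside \Cref{lemma-naive-pull-back-for-cartesian-diagram}: that lemma in turn reduces, via Zariski localization (\Cref{lemma-descent-sequences-compatible-with-zariski-localization}) and push-forward along closed immersions (\Cref{proposition-push-forward-commutes-with-naive-pull-back}), to the situation where $Y$ is the spectrum of a field, so that all four morphisms become flat and \Cref{proposition-functoriality-for-pull-back} finishes the job. The only point one has to keep an eye on is that both of these reduction tools preserve the ``generalizing'' hypothesis on all four arrows of the square, which is handled by \Cref{lemma-generalizing-stable-under-base-change-along-immersions}.
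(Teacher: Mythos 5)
Your proof is correct and matches the paper exactly: the paper's proof of this lemma is the one-line statement that it is a consequence of \Cref{lemma-naive-pull-back-for-cartesian-diagram}, applied to the cartesian square of $f$ with itself, and you have simply made explicit the verification (that $f$ and the two projections are generalizing by the universal hypothesis) that the paper leaves implicit. No issues.
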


\begin{proof}
This is a consequence of \Cref{lemma-naive-pull-back-for-cartesian-diagram}.
\end{proof}

\medskip\noindent
We give an example that \eqref{equation-descent-sequence-for-cycles} need not be a complex in general.

\begin{example}
Let $f:X\to Y$ be as in \Cref{example-f*-injective-does-not-imply-f-surjective}.
We compute
$$
\pr^\naive_i(f^\naive(\eta))=\pr^\naive_i(nx_1+x_2).
$$
for $i=1,2$.
By definition,
$$
\begin{matrix}
\pr^\naive_1(x_1)=(x_1,x_1)+(x_1,x_2)\\
\pr^\naive_1(x_2)=(x_2,x_1)+(x_2,x_2)
\end{matrix}
$$
while
$$
\begin{matrix}
\pr^\naive_2(x_1)=(x_1,x_1)+(x_2,x_1)\\
\pr^\naive_2(x_2)=(x_1,x_2)+(x_2,x_2).
\end{matrix}
$$

We get
$$
\pr^\naive_1(nx_1+x_2)=n(x_1,x_1)+n(x_1,x_2)+(x_2,x_1)+(x_2,x_2)
$$
and
$$
\pr^\naive_2(nx_1+x_2)=n(x_1,x_1)+n(x_2,x_1)+(x_1,x_2)+(x_2,x_2),
$$
which are different cycles if $n\geq 2$.
\end{example}

\medskip\noindent
This example looks puzzling if compared to the situation for \textit{subschemes}.
The transitivity of fibre products implies that for $Z\subseteq Y$ a closed subscheme the pull-backs $\pr_1^{-1}(f^{-1}(Z))$ and $\pr_2^{-1}(f^{-1}(Z))$ are always equal.
The reason that this is wrong in general for \textit{cycles} is that in the absence of flatness the schematic pull-back $\pr_i^{-1}$ ``kills lengths'' in $f^{-1}(Z)$ while the cycle pull-back $\pr^\naive_i$ does not.

\medskip\noindent
We give a rather trivial condition on $f$ which guarantees that \eqref{equation-descent-sequence-for-cycles} is a complex.

\begin{lemma}
\label{lemma-descent-sequence-is-a-complex-for-universally-bijective-morphisms}
Assume that $f$ is universally bijective.
Then
$$
0\to\Zcal^*(Y)\overset{f^\naive}{\to}\Zcal^*(X)\overset{\pr_1^\naive-\pr_2^\naive}{\to} \Zcal^*(X\times_Y X)
$$
is a complex.
In fact, $\pr_1^*=\pr_2^*$.
\end{lemma}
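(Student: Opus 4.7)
The plan is to establish the stronger statement $\pr_1^\naive=\pr_2^\naive$, from which the complex property is immediate. The key geometric observation is that under universal bijectivity the swap automorphism of $X\times_Y X$ acts trivially on the underlying topological space.

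More precisely, I would first introduce the swap isomorphism $\sigma:X\times_Y X\to X\times_Y X$ exchanging the two factors, noting the identities $\pr_2\circ\sigma=\pr_1$ and $\sigma\circ\Delta=\Delta$ for the diagonal $\Delta:X\to X\times_Y X$. Next, I would invoke the classical characterization that a morphism is universally injective (radicial) if and only if its diagonal is surjective. Since $f$ is universally bijective, it is in particular universally injective, so $\Delta$ is surjective. Combined with $\sigma\circ\Delta=\Delta$, every $w\in X\times_Y X$ equals $\Delta(x)$ for some $x$, whence $\sigma(w)=\sigma(\Delta(x))=\Delta(x)=w$; thus $\sigma$ is the identity on underlying topological spaces.

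From this I would deduce $\sigma^\naive=\Id$ directly from \Cref{definition-naive-pull-back}: for any $w$, the scheme-theoretic preimage $\sigma^{-1}(\cl[w])$ is reduced (since $\sigma$ is an isomorphism and $\cl[w]$ is reduced) and has the same underlying topological space as $\cl[w]$ (since $\sigma$ is a topological identity), so the two subschemes coincide, giving $\sigma^\naive(w)=\cycl(\cl[w])=w$. Finally, since $\sigma$ is flat (being an isomorphism), \Cref{proposition-functoriality-for-pull-back} applied to the equality $\pr_1=\pr_2\circ\sigma$ yields
$$
\pr_1^\naive=\sigma^\naive\circ\pr_2^\naive=\pr_2^\naive,
$$
and the complex property follows.

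The only non-trivial ingredient is the characterization of universal injectivity via surjectivity of the diagonal, which is classical (cf.\ \cite{EGA_I_second}) and I would not reprove. All remaining steps are essentially formal once the topological identification $\sigma=\Id$ is in hand.
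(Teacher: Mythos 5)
Your proof is correct, and it takes a genuinely different (more structural) route than the paper's. Both arguments rest on the same geometric input -- for a universally injective morphism the diagonal $\Delta:X\to X\times_YX$ is surjective, so every point of $X\times_YX$ is diagonal -- but they exploit it differently. The paper notes that $\Delta$ is a common continuous inverse to $\pr_1$ and $\pr_2$, so both projections are homeomorphisms, and then computes $\pr_i^\naive(x)$ explicitly: the preimage of $\cl[x]$ has the single generic point $(x,x)=\Delta(x)$, and the two multiplicities agree because the local ring $k(x)\otimes_{\O_{Y,f(x)}}\O_{X,x}$ is symmetric in the two tensor factors. You instead package the symmetry into the swap automorphism $\sigma$, show it is the identity on underlying spaces, deduce $\sigma^\naive=\Id$ from reducedness of pullbacks along isomorphisms, and conclude via \Cref{proposition-functoriality-for-pull-back} applied to $\pr_1=\pr_2\circ\sigma$ (legitimate, since $\sigma$ is flat and $X\times_YX$ is noetherian by standing assumption). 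Your version buys a proof with no length computation at all; this is a mild advantage, since the paper's displayed chain of equalities contains a harmless slip -- $\length(k(x)\otimes_{\O_{Y,f(x)}}\O_{X,x})$ need not equal $\length(k(x)\otimes_{k(f(x))}k(x))$ when the fibre of $f$ at $f(x)$ is non-reduced -- although its conclusion is rescued by the commutativity of the tensor product. The paper's version is in turn more self-contained, needing only that $\Delta$ is a continuous two-sided inverse rather than the radicial-iff-surjective-diagonal characterization together with \Cref{proposition-functoriality-for-pull-back}. Either way the stronger identity $\pr_1^\naive=\pr_2^\naive$ is what gets established.
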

\begin{proof}
We proof directly that $\pr_1^*=\pr_2^*$.
First, we remark, that $\pr_1$ and $\pr_2$ are in fact \textit{homeomorphisms} as the diagonal $X\to X\times_Y X$ is a continuous inverse for both.
Let $x\in X$.
We calculate
$$
\begin{matrix}
\pr_1^*(x)=\length(k(x)\otimes_{\O_{Y,f(x)}} \O_{X,x})(x,x)\\[1em]
=\length(k(x)\otimes_{k(f(x))} k(x))(x,x)=\pr_2^*(x).
\end{matrix}
$$
\end{proof}

\medskip\noindent
From now on we assume that \eqref{equation-descent-sequence-for-cycles} is a complex and call elements in $$
\Zcal^*_\desc(f):=\Ker(\pr_1^\naive-\pr_2^\naive)
$$ ``cycles with descent datum'' and elements in
$$
\Zcal^*_{\eff.\desc}(f):=\Im(f^\naive)
$$ ``cycles with effective descent datum''\footnote{ This should not be confused with ``effective'' cycles with descent datum, i.e., those cycles with descent datum whose coefficients are non-negative.}.
We denote by
$$
H_f:=\Zcal^*_\desc(f)/{\Zcal^*_{\eff.\desc}(f)}
$$
the cohomology at $\Zcal^*(X)$ of the complex \eqref{equation-descent-sequence-for-cycles}, in other words the group of cycles with descent datum modulo the cycles with effective ones.

\medskip\noindent
We come to an (obvious) obstruction for the vanishing of $H_f$.
For $y\in Y$ let
$$
g_y(f):=\gcd\{\ \length(\O_{f^{-1}(\cl[y]),x})\ |\ x\ \textrm{generic point in }\ f^{-1}(\cl[y])\}
$$
and
$$
g^\res_y(f):=\gcd\{\ \length(\O_{f^{-1}(y),x})\ |\ x\ \text{generic point in}\ f^{-1}(y)\}.
$$

\noindent
We remark that $g_y(f)$ divides $g_y^\res(f)$ but both numbers can be different in general. However, they agree if all generic points of $f^{-1}(\cl[y])$ lie over $y$.
We let
$$
g_Y(f):=\lcm\{\ g_y(f)\ |\ y\in Y\}
$$
be the least common multiple of the $g_y(f)$, which we understand as a supernatural number, i.e., a formal expression
$$
\prod\limits_{p \textrm{ prime}} p^{i_p}
$$
with $i_p\in \N_0\cup\{\infty\}$.
Similarly we set
$$
g_Y^\res(f):=\lcm\{\ g_y^\res(f)\ |\ y\in Y\}.
$$
Again $g_Y(f)$ divides $g_Y^\res(f)$ (as supernatural numbers), but in general they are different.
If $f$ is generalizing, then both agree.

\begin{lemma}
\label{lemma-case-n=1-in-criterion-for-effective-cycles-saturated}
Let $f:X\to Y$ be a morphism of noetherian schemes and $y\in Y$ a point, such that every generic point of $f^{-1}(\overline{\{y\}})$ lies over $y$.
Then the subgroup
$$
\frac{1}{g_y(f)}\Z f^*(y)\subseteq \Zcal^*(X)
$$
is saturated, i.e.,\ $\frac{1}{g_y(f)}\Z f^*(y)=\Q f^*(y)\cap \Zcal^*(X)$.
\end{lemma}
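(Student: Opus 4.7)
The plan is to reduce the statement to a purely arithmetic fact about gcds of coefficients in a free abelian group. By definition of the naive pull-back, if $x_1,\ldots,x_n$ are the (pairwise distinct) generic points of $f^{-1}(\cl[y])$ and $\ell_i := \length(\O_{f^{-1}(\cl[y]),x_i})$, then
$$
f^\naive(y) = \cycl(f^{-1}(\cl[y])) = \sum_{i=1}^n \ell_i\, x_i,
$$
and by definition $g_y(f) = \gcd(\ell_1,\ldots,\ell_n) =: d$. Since $\Zcal^*(X)$ is free abelian on the underlying set of $X$, the $x_i$ are $\Z$-linearly independent, so testing whether a $\Q$-multiple of $f^\naive(y)$ lies in $\Zcal^*(X)$ amounts to testing each coefficient separately.

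With this setup both inclusions are straightforward. For $\frac{1}{d}\Z f^\naive(y)\subseteq \Zcal^*(X)$, I would simply note that $d\mid \ell_i$ for every $i$, so $\frac{1}{d}f^\naive(y) = \sum_i \frac{\ell_i}{d} x_i$ has integer coefficients. For the reverse inclusion, given $q\,f^\naive(y)\in\Zcal^*(X)$ with $q=a/b\in\Q$ written in lowest terms ($b>0$, $\gcd(a,b)=1$), the $\Z$-linear independence of the $x_i$ forces $b\mid a\ell_i$ for each $i$, hence $b\mid \ell_i$ for each $i$, hence $b\mid \gcd(\ell_1,\ldots,\ell_n)=d$; so $q\in \frac{1}{d}\Z$ and $q\,f^\naive(y)\in \frac{1}{d}\Z f^\naive(y)$.

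There is no real obstacle here: the hypothesis that every generic point of $f^{-1}(\cl[y])$ lies over $y$ does not enter the calculation directly (it merely ensures that $g_y(f)$ coincides with $g_y^\res(f)$ and that the support of $f^\naive(y)$ lies in the fibre over $y$, as was already noted in the discussion preceding the lemma). The whole content of the statement is the elementary observation that a rational number $q$ clears the denominators of $\ell_1,\ldots,\ell_n$ simultaneously if and only if its denominator divides their gcd.
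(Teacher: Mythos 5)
Your proof is correct and takes essentially the same route as the paper's: both reduce the statement to the elementary fact that for $v=(n_1,\ldots,n_r)\in\Z^r\setminus\{0\}$ with $d=\gcd(n_1,\ldots,n_r)$ the group $\Z^r\cap\Q v$ is generated by $\frac{1}{d}v$, which the paper records as an ``observation'' and you spell out via the lowest-terms divisibility argument. Your side remark that the hypothesis on generic points lying over $y$ is not needed for the arithmetic is also consistent with the paper, whose proof uses it only to phrase the support of $f^\naive(y)$ as lying in the fibre over $y$.
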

\begin{proof}
As every cycle $\Zcal\in \Zcal^*(X)\cap \Q f^*(y)$ is a $\Z$-linear combination of the generic points in $f^{-1}(y)$ the statement follows immediately from the following observation.
For $v:=(n_1,\ldots,n_r)\in \Z^r-\{0\}$ with $d:=\gcd\{n_1,\ldots,n_r\}$ the group $\Z^r\cap\Q v$ is generated by $\frac{1}{d}v$.
\end{proof}

\begin{proposition}
\label{proposition-criterion-for-effective-cycles-saturated}
Let $f:X\to Y$ be as before, i.e.,\ $f$ is surjective such that $X\times_Y X$ is noetherian and \eqref{equation-descent-sequence-for-cycles} is a complex. Then the following hold.
\begin{enumerate}
\item The subgroup $\Zcal^*_\desc(f)\subseteq \Zcal^*(X)$ is saturated.
\item If the subgroup $\Zcal^*_{\eff.\desc}(f)\subseteq \Zcal^*(X)$ is saturated, then $g_Y(f)=1$.
\item If $g_Y^\res(f)=1$, then $\Zcal^*_{\eff.\desc}(f)\subseteq \Zcal^*(X)$ is saturated.
\end{enumerate}
In particular, if $f$ is generalizing, then $\Zcal^*_{\eff.\desc}(f)$ is saturated if and only if $g_Y(f)=1$.

\end{proposition}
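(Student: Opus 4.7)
I treat (1), (2), (3) in order. Part (1) is essentially immediate: $\Zcal^*_\desc(f)$ is the kernel of a homomorphism into the torsion-free group $\Zcal^*(X\times_Y X)$, and any such kernel is automatically saturated.

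For part (2), fix $y\in Y$ and set $d:=g_y(f)$. By definition of $g_y(f)$, the rational cycle $\tfrac{1}{d}f^\naive(y)$ has integer coefficients and so lies in $\Zcal^*(X)$. If $\Zcal^*_{\eff.\desc}(f)$ is saturated, it contains $\tfrac{1}{d}f^\naive(y)$, giving some $\Zcal'\in\Zcal^*(Y)$ with $f^\naive(d\Zcal')=f^\naive(y)$. Injectivity of $f^\naive$ (\Cref{proposition-f-surjective-implies-f*-injective}, applicable since $f$ is surjective) then yields $d\Zcal'=y$ in $\Zcal^*(Y)$, hence $d=1$. As $y$ was arbitrary, $g_Y(f)=1$.

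For part (3), assume $g_Y^\res(f)=1$, so $g_y^\res(f)=1$ for every $y\in Y$. Suppose $n\Zcal=f^\naive(\Zcal')$ with $\Zcal\in\Zcal^*(X)$, $\Zcal'=\sum_{i=1}^r m_iy_i\in\Zcal^*(Y)$ (distinct $y_i$), and $n\geq 1$. By injectivity of $f^\naive$ it suffices to show $n\mid m_i$ for each $i$, for then $\tfrac{1}{n}\Zcal'\in\Zcal^*(Y)$ and $\Zcal=f^\naive(\tfrac{1}{n}\Zcal')\in\Zcal^*_{\eff.\desc}(f)$. I induct on $r$. Choose $y_1$ such that $\cl[y_1]$ is maximal among the $\cl[y_i]$ under inclusion — equivalently, $y_1$ has minimal codimension among the $y_i$, so that $y_1$ is not a proper specialization of any other $y_j$. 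Let $x$ be a generic point of the scheme-theoretic fibre $f^{-1}(y_1)$. Two observations are crucial: first, $x$ is also a generic point of $f^{-1}(\cl[y_1])$, because a generalization $x'$ of $x$ in $f^{-1}(\cl[y_1])$ would map to a generalization of $y_1$ in $\cl[y_1]$, which must be $y_1$ itself, forcing $x'\in f^{-1}(y_1)$ and hence $x'=x$; second, since $\cl[y_1]$ is reduced and irreducible with generic point $y_1$, the ideal of $\cl[y_1]$ localized at $y_1$ coincides with $\mathfrak{m}_{Y,y_1}$, so $\O_{f^{-1}(\cl[y_1]),x}=\O_{f^{-1}(y_1),x}$. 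By the maximality of $\cl[y_1]$, the point $x$ lies in no $f^{-1}(\cl[y_j])$ with $j\neq 1$, so the coefficient of $x$ in $n\Zcal=\sum_i m_if^\naive(y_i)$ equals $m_1\cdot\length(\O_{f^{-1}(y_1),x})$. Varying $x$ over the generic points of $f^{-1}(y_1)$ and taking the gcd, the hypothesis $g^\res_{y_1}(f)=1$ forces $n\mid m_1$. Then $\tilde\Zcal:=\Zcal-\tfrac{m_1}{n}f^\naive(y_1)\in\Zcal^*(X)$ and $\tilde\Zcal':=\Zcal'-m_1y_1$ satisfy $n\tilde\Zcal=f^\naive(\tilde\Zcal')$ with one fewer term in $\tilde\Zcal'$, so induction completes (3).

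The "in particular" clause follows formally: for generalizing $f$ one has $g_Y(f)=g_Y^\res(f)$ (as noted just before \Cref{lemma-case-n=1-in-criterion-for-effective-cycles-saturated}), so (2) and (3) combine to the claimed equivalence. The main obstacle I anticipate is the two geometric observations about $x$ in part (3), particularly the local-ring identification $\O_{f^{-1}(\cl[y_1]),x}=\O_{f^{-1}(y_1),x}$, which is precisely what allows the hypothesis $g_Y^\res=1$ (rather than the weaker $g_Y=1$) to be leveraged.
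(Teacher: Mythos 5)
Your proof is correct. Parts (1) and (2) coincide with the paper's argument (kernel into a torsion-free group; resp.\ $\tfrac{1}{g_y(f)}\cycl(f^{-1}(\cl[y]))$ is integral, so saturation plus injectivity of $f^\naive$ from \Cref{proposition-f-surjective-implies-f*-injective} forces $g_y(f)=1$). In part (3) you follow the same induction skeleton as the paper -- isolate a $y_1$ that is not a specialization of the other $y_j$, prove $n\mid m_1$, subtract $\tfrac{m_1}{n}f^\naive(y_1)$ and recurse -- but you establish the key divisibility differently. The paper restricts to the open set $U$ obtained by deleting the closures of the generic points of $f^{-1}(\cl[y_1])$ not lying over $y_1$, localizes at $y_1$ via \Cref{lemma-descent-sequences-compatible-with-zariski-localization}, and invokes \Cref{lemma-case-n=1-in-criterion-for-effective-cycles-saturated}; you instead read off the coefficient of a generic point $x$ of the fibre $f^{-1}(y_1)$ directly from the identity $n\Zcal=f^\naive(\Zcal')$. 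Your two supporting observations are both correct: any generalization of $x$ inside $f^{-1}(\cl[y_1])$ maps to a point of $\cl[y_1]$ that both generalizes and specializes $y_1$, hence $x$ is generic in $f^{-1}(\cl[y_1])$; and the ideal of the integral scheme $\cl[y_1]$ localizes to $\mathfrak{m}_{Y,y_1}$ at $y_1$, giving $\O_{f^{-1}(\cl[y_1]),x}=\O_{X,x}/\mathfrak{m}_{Y,y_1}\O_{X,x}=\O_{f^{-1}(y_1),x}$. Since $f(x)=y_1\notin\cl[y_j]$ for $j\neq 1$, the coefficient of $x$ is exactly $m_1\length(\O_{f^{-1}(y_1),x})$, and taking the gcd over the (finitely many, nonempty by surjectivity) generic points of the fibre gives $n\mid m_1 g^\res_{y_1}(f)=m_1$. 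This is more self-contained than the paper's reduction and makes transparent exactly where the hypothesis on $g^\res$ rather than $g$ enters; the paper's route, by contrast, reuses its localization and push-forward machinery, which it needs elsewhere anyway.
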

\begin{proof}
The group $\Zcal^*_\desc(f)$ is the kernel of a homomorphism of torsion-free abelian groups and hence saturated.
If $y\in Y$ is a point, then
$$
\Zcal:=\frac{1}{g_y(f)}\cycl(f^{-1}(\cl[y]))
$$
is a cycle (with integral coefficients) and $g_y(f)\Zcal\in \Zcal^*_{\eff.\desc}(f)$.
As $f$ is surjective $f^\naive$ is injective by \Cref{proposition-f-surjective-implies-f*-injective} and hence if $\Zcal$ is effective, then necessarily $\Zcal=f^\naive(\frac{1}{g_y(f)}y)$ and thus, $g_y(f)=1$.
This shows that $g_Y(f)=1$ if $\Zcal^*_{\eff.\desc}(f)$ happens to be saturated.

\noindent
Conversely, assume that $g_Y^\res(f)=1$ and let $\Zcal\in \Zcal^*(X)$ be a cycle such that $m\Zcal=f^*(\mathcal{Y})$ for some integer $m\neq 0$ and some (unique) cycle
$$
\mathcal{Y}=\sum\limits_{i=1}^n m_iy_i\in \Zcal^*(Y).
$$
We assume that $m_i\neq 0$ for all $i$, that the $y_i$ are pairwise different and moreover,
$$
\codim(\overline{\{y_1\}})\leq\ldots\leq\codim(\overline{\{y_n\}}).
$$
We argue by induction on the number
$$
s:=\#\{\ f(x_i)\ |\ \Zcal=\sum m_ix_i\ \text{with}\ m_i\neq 0 \}.
$$
If $s=1$, then $n=1$ and \Cref{lemma-case-n=1-in-criterion-for-effective-cycles-saturated} can be applied to conclude $\Zcal\in \Z f^\naive(y_1)$.
In the general case, let $x_1,\ldots,x_r$ be the generic points in $f^{-1}(\cl[y_1])$ and assume that $x_1,\ldots,x_d$ lie over $y_1$ while $x_{d+1},\ldots,x_r$ do not ($r=d$ is allowed).
We define
$$
U:=X-\bigcup\limits_{i=d+1}^r \cl[x_i]
$$
and
$$
j:U\to X
$$ as the natural open immersion of $U$ in $X$.
Furthermore, let
$$
g:Y_1:=\Spec(\O_{Y,y_1})\to Y
$$
be the Zariski localization at $y_1$.
The diagram
$$
\xymatrixcolsep{6pc}
\xymatrix{
\Zcal^*(U\times_{Y} Y_1)  & \Zcal^*(U)\ar[l]_{(\Id_U\times g)^*} \\
\Zcal^*(X\times_{Y} Y_1) \ar[u]^{(j\times \Id_{Y_1})^*} & \Zcal^*(X)\ar[u]_{j^*}\ar[l]_{(\Id_X\times g)^*} \\
\Zcal^*(Y_1) \ar[u]^{(f\times \Id_{Y_1})^\naive} & \Zcal^*(Y)\ar[u]_{f^\naive}\ar[l]_{g^*}
}
$$
is commutative by \Cref{lemma-descent-sequences-compatible-with-zariski-localization}. 
We conclude
$$
(\Id_U\times g)^*j^*(m\Zcal)=m_1(f\circ j\times \Id_{Y_1})^\naive(y_1)
$$
as $y_1$ is not a specialisation of one of the $y_2,\ldots,y_n$ and therefore $g^*(\mathcal{Y})=m_1y_1$.
By the case $n=1$, applied to $(\Id_U\times g)^*j^*(\Zcal)$ and $f\circ j \times \Id_{Y_1}:U\times_Y Y_1\to Y_1$, it follows that $m$ divides $m_1$.
By construction, the induction hypothesis may be applied to
$$
\Zcal':=\Zcal-\frac{m_1}{m}f^*(y_1)
$$
as no point occuring in $\Zcal'$ lies over $y_1$.
Using induction we obtain $\Zcal'\in \Zcal^*_{\eff.\desc}(f)$ and hence $\Zcal\in \Zcal^*_{\eff.\desc}(f)$.
This finishes the proof.
\end{proof}

\medskip\noindent
We give an example to show that in general $g_Y(f)\neq g_Y^\res(f)$ and that $g_Y^\res(f)\neq 1$ is possible even if $\Zcal^*_{\eff.\desc}(f)$ is saturated.

\begin{example}
\label{example-Zeffdesc-saturated-but-gYres-not-1}
Let $Y=\Spec(R)$ be the spectrum of a discrete valuation ring $R$ with generic point $\eta=\Spec (K)$ and special point $s=\Spec(k)$.

Let $\pi\in R$ be a uniformiser and define
$$
X_1:=\Spec(R[t]/(\pi^nt^n,t^m)
$$
with $n,m\geq 1$.
Then as a topological space $X_1:=\{\eta_1,s_1\}$ with $\eta_1$ specialising to $s_1$.
We define
$$
X:=X_1 \coprod \{s\}
$$
and take $f:X\to Y$ as the natural morphism.
Then
$$
f^\naive(\eta)= e\eta_1+s
$$
and
$$
f^\naive(s)=ms_1+s
$$
with $e:=\min\{n,m\}$, hence $\Zcal^*_{\eff.\desc}(f)$ is generated by
$$
e\eta_1+s,e\eta_1-ms_1.
$$
In particular,
$$
{\Zcal^*(X)}/{\Zcal^*_{\eff.\desc}(f)}\cong {\Z\eta_1\oplus\Z s_1}/(e\eta_1-ms_1)\cong\Z\oplus \Z/d
$$
with $d=\gcd\{e,m\}$ is torsionfree, i.e., $\Zcal^*_{\eff.\desc}(f)$ saturated, if and only if $d=1$.
But
$$
g_{\eta}^\res(f)=\length(K[t]/(\pi^nt^n,t^m))=\min\{n,m\}=e\neq 1
$$
for general $n,m$, while $g_Y(f)=1$ for every $n,m$.
\end{example}

\Cref{example-Zeffdesc-saturated-but-gYres-not-1} shows, that in the statement of \Cref{proposition-criterion-for-effective-cycles-saturated} the number $g_Y(f)$ can not be replaced by $g_Y^\res(f)$.
It also shows that descent of algebraic cycles does not hold, as then $\Zcal^*_{\eff.\desc}(f)$ must be separated.

\medskip\noindent
We proceed with the question under which conditions on $f$ the group $H_f$ is zero.
Examples with $H_f\neq 0$, additionally to \Cref{example-Zeffdesc-saturated-but-gYres-not-1}, are the following. Firstly, local artinian schemes $X$ over a field $Y=\Spec(k)$ (such that $X\times_Y X$ is noetherian). Secondly, set
$$
Y=\Spec(k[t]), X=\Spec(k[t,x]/(x^n-t))
$$
and
$$f:X\to Y: (t,x)\mapsto t
$$
the natural projection.
The morphism $f$ is faithfully flat with $n\mid g_Y(f)$ as the fibre over $t=0$ is
$$
\Spec(k[x]/(x^n)).
$$
The cycle $\Zcal:=\cycl(\Spec(k[t,x]/(t,x)))$ is a cycle with descent datum, but
$$
m\Zcal=f^*(\cycl(\Spec(k[t]/(t)))
$$
has an effective descent datum for $m\in \Z$ only if $n\mid m$.

\medskip\noindent
The reason that descent can fail for generalizing morphisms is basically that two subschemes can have the same cycle without being equal, hence $\Zcal^*_\desc(f)$ is in some sense ``too large''.
Concretely, in the example $Y=\Spec(k)$ and $X=\Spec(k[t]/(t^2))$ consider the subscheme $X_{\red}\subseteq X$.
Then $\pr^{-1}_1(X_{\red})\neq \pr^{-1}_2(X_{\red})$, but both schemes have length $2$ with same support. In particular, their cycles agree.

\medskip\noindent
As another numerical invariant we define $\pi_Y^\res(f)$ as the product\footnote{ in the sense of supernatural numbers} of the $g_y^\res(f)$ over all $y\in Y$.
We arrive at our first condition on $f$ guaranteing descent of cycles (for rather trivial reasons).

\begin{proposition}
\label{proposition-descent-along-universally-bijective-morphisms}
Assume that $f$ is universally bijective and that $Y$ is of finite Krull dimension.
Then $H_f$ is a torsion group annihilated by $\pi_Y^\res(f)$\footnote{ by which we mean that every element in $H_f$ is annihilated by some natural number $\neq 0$ dividing $\pi_Y^\res(f)$}.
\end{proposition}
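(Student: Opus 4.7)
The plan is to use \Cref{lemma-descent-sequence-is-a-complex-for-universally-bijective-morphisms} to reduce the question to an integral annihilator problem. Since $f$ is universally bijective, that lemma gives $\pr_1^\naive=\pr_2^\naive$, so $\Zcal^*_\desc(f)=\Zcal^*(X)$ and $H_f=\Zcal^*(X)/\Im(f^\naive)$. The bijection $f$ admits a set-theoretic inverse $\sigma\colon Y\to X$, and $\Zcal^*(X)=\bigoplus_{y\in Y}\Z\sigma(y)$. It therefore suffices to find, for each $y\in Y$, an integer $m_y$ dividing $\pi_Y^\res(f)$ with $m_y\sigma(y)\in\Im(f^\naive)$.

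First I would unpack $f^\naive(y)=\cycl(f^{-1}(\cl[y]))$. Set-theoretically $f^{-1}(\cl[y])=\{\sigma(y'):y'\in\cl[y]\}$, and a short argument (any generization $x$ of $\sigma(y)$ in $f^{-1}(\cl[y])$ makes $f(x)$ simultaneously a generization and a specialization of $y$ in $Y$, forcing $f(x)=y$ and $x=\sigma(y)$) shows that $\sigma(y)$ is a generic point of $f^{-1}(\cl[y])$ with
\[
\O_{f^{-1}(\cl[y]),\sigma(y)}=\O_{X,\sigma(y)}/\mathfrak{m}_y\O_{X,\sigma(y)}=\O_{f^{-1}(y),\sigma(y)},
\]
which is Artinian of length $g_y^\res(f)$. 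Any other generic point of $f^{-1}(\cl[y])$ has the form $\sigma(y')$ with $y'\in\cl[y]\setminus\{y\}$, so
\[
f^\naive(y)=g_y^\res(f)\cdot\sigma(y)+\sum_i m_i\sigma(y_i),\quad y_i\in\cl[y]\setminus\{y\}.
\]

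The main step is an induction on $\dim\cl[y]$ (finite because $\dim Y<\infty$) with the refined claim that $m_y$ can be chosen to divide the supernatural number $\pi_y:=\prod_{y'\in\cl[y]}g_{y'}^\res(f)$. The base case $\dim\cl[y]=0$ means $y$ is closed, so $f^\naive(y)=g_y^\res(f)\sigma(y)$ and one takes $m_y:=g_y^\res(f)=\pi_y$. For the inductive step, each $y_i$ above satisfies $\dim\cl[y_i]<\dim\cl[y]$ and $\cl[y_i]\subseteq\cl[y]\setminus\{y\}$, so by hypothesis there is $m_{y_i}$ with $m_{y_i}\sigma(y_i)\in\Im(f^\naive)$ and $m_{y_i}\mid\pi_{y_i}\mid\prod_{y'\in\cl[y]\setminus\{y\}}g_{y'}^\res(f)$. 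Setting $L:=\lcm_i m_{y_i}$ (which therefore also divides $\prod_{y'\in\cl[y]\setminus\{y\}}g_{y'}^\res(f)$), the identity
\[
Lg_y^\res(f)\sigma(y)=L\,f^\naive(y)-\sum_i m_i\, L\sigma(y_i)\in\Im(f^\naive)
\]
delivers $m_y:=L\cdot g_y^\res(f)$, which divides $\pi_y$, closing the induction. Since $\pi_y\mid\pi_Y^\res(f)$, a general cycle $\Zcal=\sum_j n_j\sigma(y_j)$ is annihilated in $H_f$ by $\lcm_j m_{y_j}\mid\pi_Y^\res(f)$.

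The main obstacle is the divisibility bookkeeping: the bound $L\cdot g_y^\res(f)$ emerging from the inductive step need not divide $\pi_Y^\res(f)$ if one only carries the weak hypothesis $m_{y_i}\mid\pi_Y^\res(f)$, since prime exponents can accumulate. The refinement to the bound $\pi_y$ sidesteps this precisely because the factor $g_y^\res(f)$ has not yet been consumed in the annihilators of proper specializations of $y$.
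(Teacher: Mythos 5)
Your proof is correct and follows essentially the same route as the paper: reduce via \Cref{lemma-descent-sequence-is-a-complex-for-universally-bijective-morphisms} to showing each point of $X$ becomes effective after multiplication, then induct on $\dim\overline{\{f(x)\}}$ with the refined hypothesis that the annihilator divides $\prod_{y'\in\overline{\{f(x)\}}}g_{y'}^{\res}(f)$. Your write-up is if anything slightly more careful than the paper's, making explicit both the identification of $\sigma(y)$ as the unique generic point of $f^{-1}(\overline{\{y\}})$ lying over $y$ and the lcm bookkeeping in the inductive step.
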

\begin{proof}
By \Cref{lemma-descent-sequence-is-a-complex-for-universally-bijective-morphisms}, we conclude that $\Zcal^*(X)=\Zcal^*_\desc(f)$.
Let $x\in X$ be point.
We prove $g_Y^\res(f)x\in \Im(f^\naive)$ by induction on $d:=\dim \overline{\{f(x)\}}$.
More precisely, our induction hypothesis states that $x$ is annihilated by
$$
\prod\limits_{y \in \overline{\{f(x)\}}}g_y(f).
$$
If $d=0$, then $f^\naive(f(x))=g_{f(x)}^\res(f)x$ and the claim is established.
For general $x\in X$ we can write by induction
$$
f^\naive(f(x))=g_{f(x)}^\res(f)x+ \Zcal
$$
for some $\Zcal\in \Zcal^*(X)$ with $n\Zcal \in \Im(f^\naive)$ and $n$ dividing
$$
\prod\limits_{x\neq y \in \overline{\{f(x)\}}}g_y(f)
$$
because $x$ is the only point lying over $f(x)$.
In particular, $ng_{f(x)}^\res(f)x\in \Im(f^\naive)$ and the proof is finished.
\end{proof}

\medskip\noindent
Now, we come to our main theorem giving a general condition on $f$ such that descent of algebraic cycles holds for $f$.

\begin{theorem}
\label{theorem-main-theorem}
Assume that $f:X\to Y$ is a universally generalizing morphism between noetherian schemes such that $X\times_Y X$ is again noetherian. Then the following hold.
\begin{enumerate}
\item The group $H_f$ is torsion annihilated by $g_Y(f)$.
\item The descent sequence
$$
\xymatrixcolsep{4pc}
\xymatrix{
0\ar[r] & \Zcal^*(Y)\ar[r]^{f^\naive} & \Zcal^*(X)\ar[r]^{\pr_1^\naive-\pr_2^\naive} &  \Zcal^*(X\times_Y X)
}
$$
is exact if and only if $f$ is surjective and $g_Y(f)=1$.
\end{enumerate}
\end{theorem}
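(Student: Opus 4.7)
\medskip\noindent
The plan is to prove (1) by induction on the cardinality of the (finite) set $\{f(x) : x \in \Supp(\Zcal)\}$, and then deduce (2). Writing its elements as $y_1, \ldots, y_n$ with $\codim(\cl[y_1], Y) \leq \ldots \leq \codim(\cl[y_n], Y)$, I focus on $y_1$. Localizing $Y$ at $y_1$ via \Cref{lemma-descent-sequences-compatible-with-zariski-localization} kills all components of $\Zcal$ not mapping to $y_1$ (by the codimension ordering, no other $y_i$ can be a generalization of $y_1$). The remaining cycle is supported in the closed fibre $X_0 := f^{-1}(y_1)$, so by \Cref{proposition-push-forward-commutes-with-naive-pull-back} applied to the closed immersion $X_0 \hookrightarrow X' := X \times_Y \Spec(\O_{Y, y_1})$, together with the identification $(X' \times_{Y'} X')_{y_1} = X_0 \times_{k(y_1)} X_0$, the descent condition transfers to a descent condition for the corresponding cycle $\Zcal_0 \in \Zcal^*(X_0)$ along $f_0 : X_0 \to \Spec(k(y_1))$. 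Moreover $g_{y_1}(f)$ equals $\gcd_i \length(\O_{X_0, x_i})$ over the generic points $x_i$ of $X_0$, since generic points of $f^{-1}(\cl[y_1])$ lie over $y_1$ by the generalizing property. Thus the question reduces to the case $Y = \Spec(k)$ a point.

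\medskip\noindent
The field case is the crux. Let $X$ be a noetherian $k$-scheme with $X \times_k X$ noetherian. Given $\Zcal = \sum_x m_x x \in \Zcal^*_\desc(f)$, I claim $\Zcal \in \frac{1}{g} \Z \cdot \cycl(X)$, where $g = \gcd_i \ell_i$ and $\ell_i = \length(\O_{X, x_i})$ over the generic points $x_i$. First, $\Supp(\Zcal)$ consists only of generic points of $X$: were $x_0$ a non-generic point in the support, pick a generic $\xi$ of $X$ and a generic point $z$ of $\cl[x_0] \times_k X$ lying over $(x_0, \xi) \in X \times X$. The coefficient of $z$ in $\pr_1^\naive(\Zcal)$ is $m_{x_0}\cdot \length(\O_{\cl[x_0] \times_k X, z}) > 0$ (only $x=x_0$ contributes, as generic points of $\cl[x] \times_k X$ must have $\pr_1$-image equal to $x$), while the coefficient in $\pr_2^\naive(\Zcal)$ vanishes (since every generic point of $X \times_k \cl[x']$ projects via $\pr_1$ to a generic point of $X$, and $\pr_1(z) = x_0$ is not generic), contradicting descent. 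Second, writing $\Zcal = \sum_i m_i x_i$ over the generic points, for any pair $(i, j)$ pick $z \in X \times_k X$ a generic point of $\cl[x_i] \times_k X$ lying over $(x_i, x_j)$; by the symmetric argument it is also a generic point of $X \times_k \cl[x_j]$. Since $\pr_1, \pr_2$ are flat (as base changes of $X \to \Spec(k)$), the standard length formula for flat base change of an artinian local ring yields
$$
\length(\O_{X \times_k X, z}) = \ell_i \cdot \length(\O_{\cl[x_i] \times_k X, z}) = \ell_j \cdot \length(\O_{X \times_k \cl[x_j], z}),
$$
so equating $z$-coefficients in $\pr_1^\naive(\Zcal) = \pr_2^\naive(\Zcal)$ gives $m_i / \ell_i = m_j / \ell_j$. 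Therefore $\Zcal = \alpha \cdot \cycl(X)$ with $\alpha \in \frac{1}{g}\Z$, and $g \cdot \Zcal \in \Z \cdot \cycl(X) = \Im(f^\naive)$.

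\medskip\noindent
Back in the general case, the field case applied to $\Zcal_0$ yields $\Zcal_{y_1} = \alpha \cdot f^\naive(y_1)$ for some $\alpha \in \frac{1}{g_{y_1}(f)} \Z$, where $\Zcal_{y_1}$ is the subcycle of $\Zcal$ supported over $y_1$ (using that $f^\naive(y_1) = \cycl(X_0)$ viewed on $X$). Saturation of $\Zcal^*_\desc(f)$ (\Cref{proposition-criterion-for-effective-cycles-saturated}(1)) applied to $g_{y_1}(f) \Zcal_{y_1} \in \Im(f^\naive) \subseteq \Zcal^*_\desc(f)$ forces $\Zcal_{y_1} \in \Zcal^*_\desc(f)$, so $\Zcal - \Zcal_{y_1} \in \Zcal^*_\desc(f)$ has strictly smaller image set. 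Since $g_{y_1}(f) \mid g_Y(f)$, one has $g_Y(f) \Zcal_{y_1} \in \Im(f^\naive)$; combined with the inductive hypothesis $g_Y(f)(\Zcal - \Zcal_{y_1}) \in \Im(f^\naive)$, this gives $g_Y(f) \Zcal \in \Im(f^\naive)$, completing (1). Part (2) follows: $f^\naive$ is injective iff $f$ is surjective by \Cref{proposition-f-surjective-implies-f*-injective} (using the generalizing hypothesis), and $H_f = 0$ iff $g_Y(f) = 1$---the implication $g_Y(f) = 1 \Rightarrow H_f = 0$ is (1), while $H_f = 0$ makes $\Im(f^\naive) = \Zcal^*_\desc(f)$ saturated, so \Cref{proposition-criterion-for-effective-cycles-saturated}(2) forces $g_Y(f) = 1$. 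The hardest step will be verifying the length identity in the field case, which rests on a careful application of flatness of projections over $\Spec(k)$.
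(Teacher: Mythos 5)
Your proof is correct, and its core coincides with the paper's: reduce to a cycle supported in a single fibre over a point $y$, pass to $Y=\Spec(k(y))$, and then show by a length computation that a cycle with descent datum over a field is a rational multiple of $\cycl(X)$ whose denominator divides the gcd of the lengths at the generic points, so that $g_y(f)$ kills it modulo $\Im(f^\naive)$. The organization differs in two places, both legitimately. First, the paper reduces to a single fibre in one stroke by observing that $\pr_1^\naive$ and $\pr_2^\naive$ preserve the decomposition $\Zcal^*(X)=\bigoplus_{y}\bigoplus_{x\in f^{-1}(y)}\Z x$ (as $f$ is universally generalizing), whence $H_f=\bigoplus_y H_{y,f}$; you instead induct on the number of image points, peel off the fibre of minimal codimension after localizing there, and use saturation of $\Zcal^*_\desc(f)$ to keep the remainder a cycle with descent datum --- essentially transplanting the induction from the proof of \Cref{proposition-criterion-for-effective-cycles-saturated}(3). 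Both work; the direct-sum observation is shorter and yields the cleaner statement that each $H_{y,f}$ is annihilated by $g_y(f)$. Second, in the field case the paper identifies the support with the generic points, replaces $X$ by $\coprod_i\Spec(\O_{X,x_i})$, and compares the full cycles $\sum_{i,j} m_i r_j\cycl(k_i\otimes_k k_j)$; you compare coefficients at a single well-chosen generic point $z$ of $\cl[x_i]\times_k X$ over $(x_i,x_j)$, avoiding that replacement but resting on the same identity $\cycl(k_i\otimes_k R_j)=r_j\cycl(k_i\otimes_k k_j)$ coming from \Cref{lemma-cycles-of-sheaves-are-additive-in-short-exact-sequences}. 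Two minor points: your parenthetical ``no other $y_i$ can be a generalization of $y_1$'' should really say that $\cl[y_i]$ contains no generalization of $y_1$ (which does follow from the codimension ordering, and is what makes $j'^*$ kill the components over $y_i$, $i\geq 2$); and since $g_Y(f)$ is only a supernatural number, ``annihilated by $g_Y(f)$'' must be read, as in the paper, as annihilation by the finite number $\lcm\{g_{y_i}(f)\}$ over the finitely many $y_i$ occurring in $\Zcal$ --- which is exactly what your induction delivers.
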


\begin{proof}
The second claim follows directly from the first and from \Cref{proposition-f-surjective-implies-f*-injective}.
To prove the first claim assume that
$$
\Zcal=\sum\limits_{i=1}^n m_ix_i \in \mathcal{Z}^*(X)
$$
is a cycle with $\textrm{pr}_1^*(\Zcal)=\textrm{pr}_2^*(\Zcal)$.
As $f, \pr_1, \pr_2$ are generalizing we may assume by \Cref{proposition-generalizing-morphisms-preserve-grading-by-codimension} that $\Zcal\in \mathcal{Z}^r(X)$ for some $r$, hence $$\textrm{codim}(\overline{\{x_i\}},X)=r$$ for every $i=1,\ldots,n$.
Even further, as the decompositions
$$
\Zcal^*(X)=\bigoplus\limits_{y\in Y}\bigoplus\limits_{x\in f^{-1}(y)}\Z x
$$
and
$$
\Zcal^*(X\times_Y X)=\bigoplus\limits_{y\in Y}\bigoplus\limits_{z\in \pr_1^{-1}f^{-1}(y)}\Z z
$$
are preserved under $\pr_1^*$ and $\pr_2^*$, we may further assume that
$x_1,\ldots,x_n$ lie in the fibre $f^{-1}(y)$ for some $y\in Y$.
Let $Y'=\cl[y]$ be the (reduced) closure of $y$ and denote by $i:Y'\to Y$ the natural closed immersion.
Let $i':X'\to X$ be the base change of $i$ along $f$.
Then, by \Cref{proposition-push-forward-commutes-with-naive-pull-back}, the diagram
$$
\xymatrix{
0\ar[r]&\Zcal^*(Y')\ar[r]\ar@{^{(}->}[d]^{i_*}&\Zcal^*(X')\ar[r]\ar@{^{(}->}[d]^{i'_*}&\Zcal^*(X'\times_{Y'} X')\ar@{^{(}->}[d]^{({i'}\times {i'})_*}\\
0\ar[r]&\Zcal^*(Y)\ar[r]&\Zcal^*(X)\ar[r]&\Zcal^*(X\times_{Y} X)
}
$$
of descent sequences with all vertical arrows injective commutes. 
As $\Zcal$ is the image under $i'_*$ of an element with descent datum, we can replace $Y$ by $Y'$ and $X$ by $X'$ and assume that $y\in Y$ is generic.
Next we want to reduce to the case that $Y=\Spec(k(y))$ is a field.
Let $j:Y':=\Spec(k(y))\to Y$ be the inclusion and $j'$ its base change to $X$. Then the diagram
$$
\xymatrix{
0\ar[r] & \Zcal^*(Y)\ar[r]\ar[d]^{j^*} & \Zcal^*(X)\ar[r]\ar[d]^{{j'}^*} & \Zcal^*(X\times_{Y} X)\ar[d]^{(j'\times j')^*}\\
0\ar[r] & \Zcal^*(Y')\ar[r] & \Zcal^*(X')\ar[r] & \Zcal^*(X'\times_{Y'} X')
}
$$
commutes by \Cref{lemma-descent-sequences-compatible-with-zariski-localization}.
If ${j'}^*(\Zcal)$ is the image of some $\mathcal{Y}=my\in\Zcal^*(Y')$, then $\Zcal=f^\naive(my)$ as the generic fibre of $f$ agrees with $X'$ and $f$ is generalizing.
In other words, we may assume that
$$
Y=\textrm{Spec}(k),\ k:=k(y),
$$
is a point.
We next show that (if $Y$ is a point) the generic points of $X$ are precisly the $x_i$ (recall $\Zcal=\sum m_ix_i$). Let $\eta\in X$ be a generic point of $X$.
By the assumption $\pr_1^*(\Zcal)=\pr_2^*(\Zcal)$ it follows that $$\coprod\limits_{j=1}^n x_j\times_Y X = \coprod\limits_{j=1}^n X\times_Y x_j.$$
As $Y$ is a point, for every $j$ there exist a point $z\in X\times_Y x_j$ with $\pr_1(z)=\eta$ showing that
$$
\eta\in \textrm{pr}_1(\coprod\limits_{j=1}^n X\times_Y x_j)=\pr_1(\coprod\limits_{j=1}^n x_j\times_Y X)=\{x_1,\ldots,x_n\}.
$$
We assumed that there are no specialisations among the $x_i$, hence conversely every $x_i$ is generic.
Let
$$
g:X':=\coprod\limits_{i=1}^n\textrm{Spec}(\mathcal{O}_{X,x_i})\to X
$$
be the natural inclusion, which is a flat morphism.
By \Cref{proposition-functoriality-for-pull-back}, the diagram
$$
\xymatrix{
0\ar[r] & \Zcal^*(Y)\ar[r]\ar[d]^{\Id_Y} & \Zcal^*(X)\ar[r]\ar[d]^{g^*} & \Zcal^*(X\times_{Y} X)\ar[d]^{(g\times g)^*}\\
0\ar[r] & \Zcal^*(Y)\ar[r] & \Zcal^*(X')\ar[r] & \Zcal^*(X'\times_{Y'} X')
}
$$
commutes.
By construction of $X'$, the morphism $g^*$ induces isomorphisms
$$
g^*:\Zcal^*_{\eff.\desc}(f)\to \Zcal^*_{\eff.\desc}(g\circ f)
$$
and
$$
g^*:\Zcal^*_\desc(f)\to \Zcal^*_\desc(g\circ f).
$$
Hence we may replace $X$ by $X'=\coprod\limits_{i=1}^n\Spec(\O_{X,x_i})$.
Let $R_i:=\mathcal{O}_{X,x_i}$ with residue field $k_i$ and $r_i:=\textrm{length}_{R_i}(R_i)$.
We first show that $m_i/r_i$ is independant of $i$.
We compute (using \Cref{lemma-cycles-of-sheaves-are-additive-in-short-exact-sequences})
$$
\pr_1^*(\Zcal)=\sum\limits_{i=1}^n m_i\sum\limits_{j=1}^n\cycl(k_i\otimes_k R_j)=\sum\limits_{i=1}^n m_i\sum\limits_{j=1}^n r_j\cycl(k_i\otimes_k k_j),
$$
which by assumption equals
$$
\pr_2^*(\Zcal)=\sum\limits_{i=1}^n m_i\sum\limits_{j=1}^n r_j\cycl(k_j\otimes_k k_i).
$$
We can now conclude
$$
m_ir_j=m_jr_i
$$
for all $i,j=1,\ldots,n$ as the supports of $\Spec(k_i\otimes_k k_j)\subseteq X\times_Y X$ are disjoint and thus the cycles $\cycl(k_i\otimes_k k_j), i,j=1,\ldots,n,$ are linearly independent.
We obtained that with $q:=\frac{m_1}{r_1}$ the cycle
$$
\Zcal=\sum\limits_{i=1}^n m_ix_i=q\sum\limits_{i=1}^n r_ix_i=qf^\naive(y)
$$
lies in $\Q f^\naive(y)\cap \Zcal^*(X)=\frac{1}{g_y(f)}\Z f^*(y)$ and hence
$$
g_y(f)\Zcal\in \Im(f^*),
$$
which finishes the proof.
Indeed, in the general situation the group
$$
H_f=\Zcal^*_\desc(f)/{\Zcal^*_{\eff.\desc}(f)}
$$
is the direct sum over $y\in Y$ of the groups
$$
H_{y,f}:=((\bigoplus\limits_{x\in f^{-1}(y)}\Z x)\cap \Zcal^*_\desc(f))/{\Z f^*(y)},
$$
as $f$ is universally generalizing, and we proved that for $y\in Y$ the direct summand $H_{y,f}$ is annihilated by $g_y(f)$.
Hence, $H_f=\bigoplus\limits_{y\in Y} H_{y,f}$ is annihilated by
\[
g_Y(f)=\lcm\{\ g_y(f)\ |\ y\in Y\}.\qedhere
\]
\end{proof}

\noindent
Finally we want to remark, that \Cref{theorem-main-theorem} implies that for every scheme $X$, separated and of finite type over a field $k$, the presheaf with transfers $\Z_\tr(X)$ represented by $X$ (see \cite[Definition 2.8]{LectNotesOnMotCohom}) is a sheaf for the \'{e}tale topology, i.e., for every \'{e}tale surjection $f:Y\to Z$ of smooth, separated schemes over $k$ the sequence
$$
0\longrightarrow \Z_\tr(X)(Z)\overset{f^*}{\longrightarrow} \Z_\tr(X)(Y)\overset{\pr_1^*-\pr_2^*}{\longrightarrow} \Z_\tr(X)(Y\times_Z Y)
$$
is exact.

\begin{corollary}
\label{corollary-presheaf-with-transfers-is-a-sheaf}
Let $k$ be a field and let $X$ be a separated scheme of finite type over $k$.
Then the presheaf with transfers $\Z_\tr(X)$ is a sheaf for the \'{e}tale topology.
\end{corollary}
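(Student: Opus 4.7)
The plan is to deduce the corollary from \Cref{theorem-main-theorem} applied to the base change
$$F := f \times \Id_X \colon Y \times_k X \longrightarrow Z \times_k X.$$
Since $f$ is étale and surjective, so is $F$. Étale morphisms are flat, hence universally generalizing, and their fibres are reduced (finite products of separable field extensions), so $g_{Z \times X}(F) = 1$. Moreover, the double fibre product $(Y \times X) \times_{Z \times X} (Y \times X) = (Y \times_Z Y) \times X$ is of finite type over $k$, hence noetherian, so the hypotheses of \Cref{theorem-main-theorem} are satisfied.

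First I would invoke \Cref{theorem-main-theorem} to get the exactness of
$$0 \to \Zcal^*(Z \times X) \xrightarrow{F^\naive} \Zcal^*(Y \times X) \xrightarrow{\pr_1^\naive - \pr_2^\naive} \Zcal^*((Y \times_Z Y) \times X).$$
Because every morphism in sight is étale and therefore flat, the naive pullbacks $F^\naive$ and $\pr_i^\naive$ agree with the flat pullback of cycles by \Cref{proposition-flat-pull-back-commutes-with-cycles-of-subschemes}, which in turn is precisely the transfer map defining $f^*$ and $\pr_i^*$ on $\Z_\tr(X)$. Thus the displayed sequence literally contains the candidate sub-sequence of the corollary, and I may focus entirely on showing that the exactness restricts to the subgroups of finite correspondences.

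The remaining step is to check that if $\alpha \in \Z_\tr(X)(Y) \subseteq \Zcal^*(Y \times X)$ is killed by $\pr_1^* - \pr_2^*$, then the unique $\beta \in \Zcal^*(Z \times X)$ with $F^\naive(\beta) = \alpha$ supplied by exactness in fact lies in $\Z_\tr(X)(Z)$; injectivity restricts automatically. Writing $\beta = \sum n_W [W]$ with $W \subseteq Z \times X$ integral, one has $F^{-1}(W) = W \times_Z Y$, which is reduced since $F$ is étale, and whose components are among those appearing in $\alpha$. Each such component is finite over some irreducible component of $Y$. Since being finite is fpqc-local on the target and $f \colon Y \to Z$ is faithfully flat, $W \to Z$ is finite; and since $f$ maps irreducible components to irreducible components (étale morphisms preserve generic points and are open), the image of $W$ in $Z$ dominates an irreducible component of $Z$. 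Thus $\beta$ is a finite correspondence.

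The main obstacle I anticipate is the last paragraph: cleanly verifying that both finiteness and ``surjects onto an irreducible component'' descend from $W \times_Z Y \to Y$ to $W \to Z$, especially when $Z$ is reducible so that different components of $\beta$ may dominate different components of $Z$. Both facts are standard consequences of faithfully flat descent and of the preservation of irreducible components by étale surjections, but the bookkeeping must be spelled out carefully to conclude. Once this is done, the corollary is immediate.
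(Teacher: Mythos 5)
Your proposal is correct and follows essentially the same route as the paper: apply \Cref{theorem-main-theorem} to $f\times\Id_X$ (étale surjective, hence universally generalizing with $g=1$), identify the naive/flat pullback of absolute cycles with the transfer pullback, and then descend the finite-correspondence conditions (finiteness and dominance over a component) along the faithfully flat map $f$. The only cosmetic difference is that the paper justifies the compatibility of the relative-cycle pullback with the absolute-cycle pullback by citing a lemma from the relative cycles literature, where you assert it directly.
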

\begin{proof}
Let $Y\to Z$ be an \'{e}tale surjection of smooth, separated schemes over $k$.
By \cite[Lemma 3.3.12]{RelativeCycles} the pull-back
$$
f^*:\Z_\tr(X)(Z)\to \Z_\tr(X)(Y)
$$ of relative cycles is induced by the pull-back of absolute cycles
$$
(\Id_X\times f)^*:\Zcal^*(X\times_k Z)\to \Zcal^*(X\times_k Y)
$$
along the inclusions $\Z_\tr(X)(Z)\subseteq \Zcal^*(X\times_k Z)$ resp.\ $\Z_\tr(X)(Y)\subseteq \Zcal^*(X\times_k Y)$.
By \Cref{theorem-main-theorem} descent of algebraic cycles holds for $\Id_X\times f$ and therefore it suffices to show that for every $w\in X\times_k Z$ the subscheme $\cl[w]$ is finite and surjective over a component of $Z$ if and only if the components of the cycle $f^*(w)\in \Zcal^*(X\times_kY)$ are finite and surjective over a component of $Y$.
But this last statement follows as the properties of being finite and dominant over a component descent along quasi-compact faithfully flat morphisms.
\end{proof}

\bibliography{bibliography}
\bibliographystyle{amsalpha}

\end{document}